\tikzset{tick/.style={draw, minimum width=0pt, minimum height=2pt, inner sep=0pt, label=below:$#1$},
    tick/.default={}}
\newtheorem{theorem}{Theorem}[section]
\newtheorem*{theorem*}{Theorem}
\newtheorem{proposition}[theorem]{Proposition}
\newtheorem{claim}[theorem]{Claim}
\newtheorem{lemma}[theorem]{Lemma}
\newtheorem{example}{Example}[section]
\newtheorem{remark}{Remark}[section]
\newcounter{note}[section]
\newcommand{\inlineitem}[1][]{%
\ifnum\enit@type=\tw@
    {\descriptionlabel{#1}}
  \hspace{\labelsep}%
\else
  \ifnum\enit@type=\z@
       \refstepcounter{\@listctr}\fi
    \quad\@itemlabel\hspace{\labelsep}%
\fi}
\def\sse{\subseteq}
\newcommand{\pr}{\mathbf{P}} 
\newcommand{\E}{\mathbb{E}} 
\newcommand{\ZZ}{\mathbb{Z}} 
\newcommand{\x}{\mathbf{x}}
\newcommand{\C}{\mathcal{C}}
\newcommand{\ZF}{\mathtt{ZF}}
\newcommand{\OPT}{\mathtt{OPT}}
\newcommand{\LP}{\text{LP}}
\newcommand{\ALG}{\mathtt{ALG}}
\newcommand{\eA}{\mathtt{A}}
\newcommand{\SF}{\mathtt{DF}}
\newcommand{\run}{\mathtt{Select}}
\newcommand{\NC}{\mathtt{NC}}
\newcommand{\ignore}[1]{}
\newcommand{\ssx}{{\fontfamily{cmtt}\selectfont SSE}\xspace}
\newcommand{\algval}{\mathtt{ALG}}
\newcommand{\lp}{\textbf{\textsc{LP$_\text{UB}$}}}
\newcommand{\lpddl}{\textbf{\textsc{LP-ddl}}}
\newcommand{\lpddla}{\textbf{\textsc{ddl(a)}}}
\newcommand{\lpddlb}{\textbf{\textsc{ddl(b)}}}
\renewcommand{\cite}[1]{\citep{#1}}
\definecolor{mydarkblue}{rgb}{0,0.08,0.45}
\definecolor{mydarkred}{rgb}{0.75, 0.27, 0.34}
\title{Sequential Selection with Expirations}
\author{Yihua Xu\footnote{Department of Computational Applied Mathematics and Operations Research / Ken Kennedy Institute, Rice University, USA. Email: yx67@rice.edu.} \and Rohan Ghuge\footnote{Department of Information, Risk, and Operations Management, McCombs School of Business, University of Texas at Austin, USA. Email: rohan.ghuge@mccombs.utexas.edu.}  \and Sebastian Perez-Salazar\footnote{Department of Computational Applied Mathematics and Operations Research / Ken Kennedy Institute, Rice University, USA. Email: sp121@rice.edu.}}
\begin{document}

\maketitle
\thispagestyle{empty}

\begin{abstract}
    {\bf Problem definition:} 
    Many decision-making environments involve opportunities that disappear if not acted upon quickly and whose evaluation requires uncertain amounts of time. 
    Examples include on-demand service platforms, financial portfolio selection, and drug discovery, where high‑value options may expire if delayed and evaluating any option consumes scarce time and resources. 
    We model these settings through the \emph{sequential selection with expirations} (\ssx) problem: a decision-maker must evaluate options one at a time, each option has a random processing time and a random expiration time, and value is earned only if evaluation finishes before expiration. The goal is to design a policy that maximizes expected total value.
    {\bf Methodology/results:} 
    We develop a \emph{time-indexed} linear programming (LP) relaxation that upper bounds the performance of any online policy for \ssx.
    Using this LP, we design a polynomial-time $\frac{1}{2}\cdot\left(1-\frac{1}{e}\right)$-approximation algorithm. 
    We also establish hardness results showing that our analysis is nearly tight.
    For the special case where processing times are independent and identically distributed (i.i.d.), we prove that a simple greedy policy,
    which selects the highest-valued available option, achieves a $\frac{1}{2}$-approximation, and that this guarantee is the best possible for this class of policies.
    Our framework extends naturally to variants with deadlines and knapsack constraints, for which we provide analogous approximation guarantees.
    {\bf Managerial implications:} 
    \ssx captures the natural tension in sequential decision-making, where decision-makers must weigh the benefit of evaluating an option that appears profitable now—without knowing how long the evaluation will take—against the risk of missing other opportunities that may expire during that time. Our results show that being fully greedy in the i.i.d. evaluation-time setting, or greedy and LP-informed in the general case, can effectively alleviate these tensions. These approaches, which are easy to implement, are robust to natural model extensions extending the applicability of \ssx. Two computational studies---one in active learning and one using a call-center datasets---demonstrate the practical performance of our solutions and provide decision-makers with guidance on balancing speed and value under uncertainty and expiration.
\end{abstract}

\newpage

\setcounter{page}{1}
\section{Introduction}\label{sec:intro}

In many practical applications, such as on-demand service platforms offering client-facing services, e-commerce firms testing different promotional strategies, or investors selecting assets for a financial portfolio, the decision-maker (DM) is faced with an abundance of options.
Each option requires some time to search over and not all options may be available throughout the search process.
Much of the traditional literature assumes that all options remain available indefinitely (see, for example, prior work on stochastic knapsack models~\citep{dean2008approximating,ma2018improvements,li2023fully}, virtual machine allocation~\citep{cohen2019overcommitment}, and models of sequential search~\citep{weitzman1978optimal,epstein2024selection}).
While such models are useful for controlled, static environments, they fail to capture urgency, competition, decay, and other real-world dynamics.
For example, in on-demand service platforms, client requests may expire if not matched quickly; in e-commerce, promotional strategies may lose relevance as consumer behavior or product inventories change; and, in financial portfolio selection, investment opportunities may no longer be viable if not acted upon promptly. 

Although selecting an option yields a reward, in many practical settings, it often requires spending a (possibly stochastic) amount of time before the reward can be obtained.
For instance, on-demand service platforms must match client requests to suitable providers but matching a client request generates revenue for the platform;  marketing teams must invest resources to evaluate the effectiveness of new promotional strategies, which may simultaneously improve sales; and financial firms must conduct due diligence before investing, but successful investments can yield a substantial financial return.
Consequently, it may be infeasible to fully evaluate—and benefit from—every option before some become unavailable.
This can result in a loss of revenue in several settings such as online service platforms, e-commerce firms, or financial portfolio management.
It is thus important for the DM to design prioritization policies for selecting from the available set of options to maximize the total reward obtained.

Motivated by this, we study a \emph{sequential selection} problem in which the DM is presented with $n$ options. The DM can select the options one at a time, and each option $j \in [n]$ has a value $v_j$ associated with it and an initially unknown ``evaluation'' or ``processing'' time, distributed according to a known probability distribution. 
While the DM evaluates a particular option, the remaining options may become unavailable or ``expire'' with some (known) probability.
This process repeats until there are no options remaining. 
The objective of the DM is to maximize the total value obtained from the selected options. 
We defer the formal description of the problem, which we term as \emph{sequential selection with expirations} (\ssx), to \Cref{sec:model}. 

Our problem is related to two streams of research: \emph{stochastic combinatorial optimization} and \emph{selection with uncertainty}.
Solutions to problems in stochastic combinatorial optimization are sequential decision processes, where each selection is made sequentially, and possibly adaptively, using all the information from previous selections. 
The optimal solution can be represented by a Markov decision process (MDP), and often suffers from the curse of dimensionality. 
However, many recent works have obtained polynomial-time approximation algorithms for these problems (see, for e.g., \citet{goemans2006stochastic, dean2008approximating, guha2007approximation, bansal2012lp, asadpour2016maximizing,  JiangLL+20, ghuge2024nonadaptive}).
The notion of selection with uncertainty has been used to generalize many fundamental search problems, especially related to sequential search; see, for e.g., ~\citet{kesselheim_mdp, purohit2019hiring, epstein2024selection, perez2024robust, brown2024sequential}.
More recently, the idea of uncertain selection has also been studied in the context of \emph{multistage optimization} by \citet{cygan2013catch} and \citet{segev2024near}. 

The main result of our paper is a polynomial-time constant-factor approximation algorithm for \ssx relative to the optimal online algorithm. 
See \Cref{appendix:benchmark} for a detailed justification of this choice of benchmark.
Henceforth, unless stated otherwise, all performance guarantees are measured relative to the optimal online policy.
Below, we provide a summary of our contributions.

We initiate the study of \emph{sequential selection with expirations}, a model that captures sequential selection settings where (1) the processing time of each option is an \emph{independent} random variable, and (2) each option \emph{independently} stays in the system for a random amount of time, both following known distributions.
We develop a compact linear program that upper bounds the performance of any online policy, and leverage its optimal solution to design a polynomial-time $\frac{1}{2}\cdot\left(1-\frac{1}{e}\right) \approx 0.316$-approximation algorithm. 
We demonstrate the generality of this approach by analyzing natural extensions of \ssx, such as incorporating a knapsack constraint or imposing hard deadlines on the options. For each of these extensions, we design algorithms that achieve approximation guarantees comparable to those for the vanilla \ssx.
Furthermore, we analyze \ssx under the special case where processing times are independent and identically distributed. We prove that, in this case, the greedy policy which selects the highest-valued available option whenever possible achieves a $\frac{1}{2}$-approximation to the online optimum. We further show that this bound is tight for this class of greedy policies.
Finally, we extensively evaluate our algorithms on instances generated from synthetic and real-world datasets to validate our theoretical results.

\subsection{Problem Definition}\label{sec:model}
In the \emph{sequential selection with expirations} (\ssx) problem,  we are given $n$ options. 
Each option $j \in [n] = \{1, 2, \ldots, n\}$ has a deterministic value $v_j > 0$, and is associated with two random variables $S_j$ and $E_j$.
We use $S_j$ and $E_j$ to denote the uncertain evaluation time and the uncertain expiration time of option $j$, respectively.
We assume that all random variables are independent with known probability distribution; however, the realization remains unknown. 

The random process we consider evolves along a sequence of discrete stages or time units, beginning with $t=1$. To formalize the system dynamics, we use $R_t$ to denote the set of options available at time $t \in \mathbb{N}$. 
We say that the DM is \emph{busy} when an option is being evaluated, else the DM is \emph{free}. 
At the beginning, all options are available; i.e., $R_1 = [n]$, and the DM is free.
Suppose the DM is free at time $t$. Then, the following sequence of steps occurs. 
\begin{enumerate}
    \item The DM selects an option $j \in R_t$, which causes the DM to be busy for $S_j$ units of time. 
    Once evaluation is complete, the corresponding value $v_j$ is obtained.
    
    \item In the meanwhile, other options $j' \in R_t$ may expire according to the r.v.s $E_{j'}$. 
\end{enumerate}

\noindent We assume that a selected option can neither be preempted (that is, it cannot be paused and re-evaluated later) nor expire.  We use $\overline{F}_{S_j}(t) := \Pr(S_j > t)$ and $p_j(t) := \Pr(E_j > t)$ to denote the tail distributions of $S_j$ and $E_j$, respectively. 
For simplicity, we will assume that $\overline{F}_{S_j}(T) = 0$ for some $T \in \mathbb{N}$, and that $p_j(t) > 0$ for all $t \geq 1$ for each $j \in [n]$. 
We note that both of these assumptions can be made at the expense of an arbitrarily small loss in the objective value (see \Cref{app:bounded_support} and~\ref{app:unbonuded-support} for details). 

A solution to \ssx is a sequential selection policy that, given a set of available options and the current time $t \leq T$, decides which option to select next (when the DM is free). 
The objective in \ssx is to design a sequential selection policy in order to maximize the expected value obtained from selected options.
We use $\OPT$ to denote the optimal policy. It is easy to check that $\OPT$ can be computed recursively as follows, 
$$V(t, R) = \max_{j \in R} \left\{ \sum_{\tau =1} ^{T} \Pr\left(S_j = \tau\right) \cdot \left(v_j + \sum_{R' \subseteq R \setminus \{j\}} \Pr\left(R',t+\tau\right) \cdot V\left(t + \tau, R'\right) \right) \right\}, $$
where $V(t, R)$ denotes the maximum expected value that can be obtained starting from time $t$ when options $R \sse [n]$ are still available.
Above, $\Pr(R',t+\tau)$ denotes the probability that options $R' \sse R\setminus \{j\}$ are available at the start of time $t+\tau$.
We note that the above dynamic program suffers from the curse of dimensionality, making it computationally infeasible for large instances. 
In fact, \ssx captures the well-studied NP-hard knapsack problem; thus, it is unlikely that a succinct characterization of the optimal policy exists (see \Cref{appendix:benchmark} for details).

\medskip
\noindent \textbf{Extensions.} We consider two extensions of \ssx that are motivated by real-world applications. In the first extension, we consider \emph{options with deadlines}: each option $j \in [n]$ is associated with a deadline $D_j$, and a value is obtained only if the option is evaluated completely before its deadline. 
Formally, if an option selected at time $t$ requires $\tau$ time for evaluation such that $t + \tau \leq D_j$, then the policy obtains value $v_j$; otherwise, it receives zero value.
In the second extension, we introduce a \emph{global knapsack constraint} on the selection process. Each option $j \in [n]$ is assigned a weight $w_j \geq 0$, and the total weight of selected options must not exceed a given capacity $W$. 

\begin{remark}[Uncertain and Time Dependent Values]
We note that our framework can naturally accommodate \emph{uncertain values}, as long as their distributions are known in advance. 
In such cases, the value $v_j$ associated with each option $j$ can be replaced by its expected value without affecting the analysis.
Moreover, our techniques extend to settings with certain forms of \emph{time-dependent} values, specifically when the value associated with an option depends on the time it is selected (rather than when it completes evaluation). 
When values are obtained immediately upon selection, incorporating time-dependent value functions $v_j(t)$ that vary with the selection time $t$ is straightforward and does not alter the structure of our algorithms or analysis. 
\end{remark}

\subsection{Results and Techniques} \label{sec:results-techniques}

Our first main result establishes a constant-factor approximation for the sequential selection with expiration problem.

\begin{theorem}\label{thm:main-approx}
    For any $\epsilon > 0$, there exists a polynomial-time algorithm that achieves a $\textstyle \left(0.316-\epsilon\right)$-approximation  for the sequential selection with expiration problem, relative to the online optimum.
\end{theorem}

This result builds on and significantly extends the prior work of~\citet{cygan2013catch} and~\citet{segev2024near}.
In particular, while those works studied selection problems under geometrically distributed departures and instantaneous evaluations, our model generalizes this setting in two key ways: (1) by incorporating stochastic service times, and (2) by allowing arbitrary expiration distributions.
Concretely, our result resolves an open question posed by~\citet{segev2024near}, who asked whether similar approximation guarantees could be obtained in the more general setting with arbitrary departure times.

In order to obtain our result, we first introduce a time-indexed linear program (LP) that upper bounds the expected total value of any policy, including that of the optimal policy (see~\Cref{thm:main-upperbound-opt}). 
This LP captures feasibility \emph{in expectation}, using marginal decision variables $x_{j, t}$ that represent the probability of selecting option $j$ at time $t$.
However, directly rounding an optimal LP solution faces two key challenges: (1) selecting an option at time $t$ affects the probability that the DM is available at a future time $\tau > t$, creating dependencies across time steps, and 
(2) na\"{i}ve independent rounding may result in multiple options being selected simultaneously, whereas the DM can select only one option at a time.
We get around these challenges as follows. To address the first issue, we scale down the LP solution to ensure that, at any time $t$, the DM remains free with a sufficiently large probability. In order to balance this with obtaining a constant fraction of the LP value at each time step, we select a scaling factor of 1/2.
To get around the other issue, 
we introduce the notion of a \emph{consideration set}. Whenever the DM is free, we construct this set by sampling from the scaled LP solution, restricted to the currently available options.
From this set, we then greedily select the highest-valued option. This final greedy step ensures that, in expectation, we obtain at least a $(1-1/e)$ fraction of the value obtained by the LP at that corresponding time step. 

Lastly, we  demonstrate the limitation of our algorithm and the analysis. We show that our approximation guarantee is nearly tight: we construct an instance in which our algorithm achieves at most a $(1 - 1/\sqrt{e}) \approx 0.393$ fraction of the LP optimum demonstrating that our analysis cannot be significantly improved in general.
We also provide a bound of $(1-1/e)$ on the integrality gap of our LP, establishing a fundamental limit on what any LP-based algorithm that uses our LP can achieve.
See \Cref{lem:approx_hard} and \Cref{lem:lp-gap} for details.


We obtain a better approximation guarantee when the evaluation times of options are independently and identically distributed.

\begin{theorem}\label{thm:greedy}
There exists a polynomial-time algorithm that achieves a $0.5$-approximation for the sequential selection with expiration problem, relative to the online optimum, when the evaluation times are independently and identically distributed.
\end{theorem}

We obtain this result using a natural greedy policy: select the highest-valued available option whenever the DM is free.
While the policy is simple to state, analyzing its performance requires a careful combination of coupling and charging arguments.
At a high-level, we use a coupling argument to ensure that the DM is free at the same time steps under both the greedy and optimal policies.
Then, through a charging argument, we show that, whenever the greedy policy selects an option, twice the value collected by the greedy policy suffices to account for the 
value collected by the optimal policy at the same time and one step in the future. 
We present more details in \Cref{greedy_section}. 
Finally, we note that our analysis for the greedy algorithm is tight, as demonstrated by the following example.

\begin{example}\label{ex:greedy_bad_iid}
Consider the following instance with $n = 2$ options. 
We set the parameters of the options as follows: $v_1 = 1+\varepsilon$, $E_1 = 3$ with probability $1$; $v_2 = 1$ and $E_2 = 2$ with probability $1$.
Let $S_1$ and $S_2$ be $1$ with probability $1$.
Observe that the greedy algorithm (serving the highest valued option) obtains 
$(1+\varepsilon)/(2+\varepsilon)$ times the value obtained by an optimal policy.
\end{example}

Due to the inherent simplicity and practicality of greedy solutions, it might be tempting to formulate greedy algorithms for \ssx.
However, as the subsequent examples illustrate, these intuitive, 
natural greedy approaches, such as prioritizing options based on their value or on the ratio of value to expected evaluation time, do not yield favorable results.

\begin{example}\label{ex:greedy_bad_noniid}
Consider the greedy algorithm that selects the option with the largest value.
Consider the following instance of $n$ options. We set the parameters of option $1$ as follows: $v_1 = 1+\varepsilon$, $E_1 = n+2$
and $S_1 = n$ with probability $1$. For each option $j \in \{2, \ldots, n\}$, we have 
$v_j = 1$, $E_j = n+1$ and $S_j = 1$ with probability $1$. 
Observe that the greedy algorithm 
selects option $1$ at time $t=1$ and
obtains a value of $1+\varepsilon$, 
while the optimal solution obtains a total value of $n+\varepsilon$ by first serving options $2, \ldots n$, and then option $1$. 
\end{example}

\begin{example}
Consider the greedy algorithm that selects the option with the largest ratio of value to expected evaluation time. Consider the behavior of this algorithm on the following instance with $n=2$. We have $v_1=1+\varepsilon$, $E_1 = K+3$ and $S_1=1$ with probability $1$, and $v_2=K$, $E_2 = 2$ and $S_2 = K$ with probability $1$. 
The greedy algorithm selects option $1$ at time $t=1$, and loses out on the value of option $2$ (which leaves the system at $t = 2$). Thus, the greedy algorithm obtains a value of $1 + \varepsilon$,
while the optimal solution obtains a total value of $K+1+\varepsilon$.
\end{example}

Our techniques naturally extend to handle richer variants of the sequential selection with expiration problem introduced in \Cref{sec:model}. 
We first consider the setting with option-specific deadlines, where each option must be fully evaluated before a prescribed deadline in order to obtain any value.
This model generalizes the classic stochastic knapsack problem~\citep{dean2008approximating} by allowing options to leave the system.
Our main result in this setting is as follows.
\begin{theorem}\label{thm:main-deadlines}
    For any $\epsilon > 0$, there exists a polynomial-time algorithm that achieves a $\textstyle \left(0.316-\epsilon\right)$-approximation  for the \ssx with deadlines problem, relative to the online optimum.
    Moreover, if the evaluation times are independently and identically distributed, there exists a polynomial-time algorithm that achieves a $0.5$-approximation.
\end{theorem}

Our next extension introduces a global knapsack constraint on the options that the decision-maker (DM) can select. In this setting, each option $j$ has a weight $w_j\geq 0$, 
and the total weight of selected options cannot exceed a predetermined capacity $W$. 
This also captures cardinality constraints as a special case, where $w_j=1$ for all $j\in [n]$ and $W=k\in \ZZ$. 
Letting $w_{\max} = \max_j w_j$, our main result in this setting is as follows.

\begin{theorem}\label{thm:main-knapsack}
    For any $\epsilon > 0$, there exists a polynomial-time algorithm that achieves a \\ \(\left((1 - e^{-\Omega(W^2/nw_{\max}^2)}) \cdot 0.316-\epsilon\right)\)-approximation  for \ssx with a knapsack constraint, relative to the online optimum.
\end{theorem}

For the special case of cardinality constraints, we obtain the following result.
\begin{theorem}
    For any $\epsilon > 0$, there exists a polynomial-time algorithm that achieves a \\ $\left((1 - e^{-\Omega(k)}) \cdot 0.316-\epsilon\right)$-approximation  for \ssx with a cardinality constraint, relative to the online optimum.
    Moreover, if the evaluation times are independently and identically distributed, there exists a polynomial-time algorithm that achieves a $0.5$-approximation.
\end{theorem}

{
\subsection{Selected Applications}

We demonstrate the practical applicability of the sequential selection with expiration problem using an application in \emph{active search}, 
where one is given a pool of unlabeled points with unknown binary labels, and the goal is to identify as many positive points as possible under a budget constraint, or to find a target number of positives using as few queries as possible~\citep{garnett2012bayesian, jiang2017efficient, jiang2019cost}. 

Active search arises in domains where input data is easy to obtain, but label acquisition is costly or time-consuming. 
For instance, in drug discovery, it is relatively easy to generate molecular structures, but testing their efficacy (i.e., whether they are ``positive'' hits) requires expensive experiments.
An early example of this arose in the \textsc{Meta-DENDRAL} project where the goal was to learn rules that could predict the behavior
of molecules inside a mass spectrometer~\citep{buchanan1978model, lindsay1980applications}. 
A similar bottleneck appears in product design, where it is easy to prototype ideas but costly to test their actual impact through A/B testing or consumer trials.
The canonical active search setup involves a finite set $\mathcal{X}$ of items, among which an unknown subset $\mathcal{R} \subseteq \mathcal{X}$ are ``positives.'' 
The aim is to either recover as many elements of $\mathcal{R}$ as possible given a fixed query budget, or to reach a fixed number of positives using as few queries as possible. Each query reveals the label of a selected item but consumes time or resources.

We argue that our sequential search with expiration framework offers a more realistic abstraction for such applications. 
In many practical settings:
\begin{enumerate}
    \item \emph{Evaluation incurs time-dependent opportunity costs.} For instance, drug assays or user experiments require non-negligible durations, during which other opportunities may be lost.

    \item \emph{Items may become unavailable.} For example, due to expiration of relevance, depletion of experimental resources, or shifting external constraints.

    \item \emph{Prioritization under uncertainty is crucial.} High-value items (e.g., promising drug candidates or novel features) may require longer evaluations, risking the loss of other options.

\end{enumerate}

Our model captures these dynamics by incorporating both uncertain evaluation times and stochastic expirations, offering a unified framework that 
captures many practical considerations.
This yields a natural generalization of active search, and suggests new algorithmic strategies with provable guarantees under more realistic assumptions.

As a proof of concept for our framework, one of our computational experiments constructs an instance using data from a publicly available large language model (LLM) leaderboard, which reports performance on the GPQA benchmark, a challenging benchmark designed to test graduate-level proficiency in multiple subjects.
Here, an instance of a model corresponds to an ``option''.
The value of a model is given by its reported accuracy or performance score on GPQA, while the cost and evaluation time of each model are derived from metadata such as model size, inference latency, and the number of tokens processed per second (a proxy for how quickly the model can be evaluated).
Our model enables us to evaluate policies that balance high performance potential with fast evaluation, and compare them against natural baselines that greedily select the highest-scoring or fastest models. 
See \Cref{sec:num_exp} for further details.

}

\paragraph{Organization.} 
The remainder of the paper is organized as follows.
We discuss related work in \Cref{sec:related-work}.
We present our LP-based algorithm and prove \Cref{thm:main-approx} in \Cref{sec:main-alg}.
In \Cref{greedy_section}, we show that a natural greedy policy achieves a $0.5$-approximation for instances with i.i.d.\ evaluation times (thus proving \Cref{thm:greedy}). 
We present our computational results in \Cref{sec:num_exp} and conclude in \Cref{sec:conclusion}.


\section{Related Work}\label{sec:related-work}
The sequential search problem has been extensively studied in operations research, economics, and computer science (see, e.g., \citet{baucells2024search, brown2024sequential, kleinberg2016descending}). Classical models typically involve a decision-maker who sequentially evaluates options under value uncertainty, aiming to maximize the expected (discounted) value from the selected option, while accounting for search costs and operational constraints. Our model bears a loose resemblance to Weitzman’s classical \emph{Pandora’s box problem}~\citep{weitzman1978optimal}, where exploring an option incurs an explicit \emph{opening cost}. 
In our setting, the evaluation time functions as an implicit cost of exploration, since allocating time to evaluate one option inherently limits the opportunity to examine others within the same time horizon. 
Numerous extensions have been proposed to enrich this framework. 
For instance, one line of work explores settings in which options may be selected without direct exploration~\citep{beyhaghi2023pandora, alaei2021revenue}, trading off savings in search cost for the risk of accepting lower-value options. 
In our setting, the trade-off arises from temporal constraints: while high-value options are desirable, their evaluation may consume more time, during which other options expire. 
This introduces a form of opportunity cost driven by evaluation delays. 
Such role of time in search models has been a central theme in the literature, with important contributions addressing simultaneous~\citep{chade2006simultaneous}, parallel~\citep{vishwanath1992parallel}, and ordered~\citep{armstrong2017ordered} search processes.

Our problem bears structural resemblance to scheduling problems. From the job scheduling perspective, a single server processes $n$ queued jobs. When idle, it selects a job which yields value, and becomes busy for a random service time. During this period, the remaining jobs may depart. 
Our problem shares similarities with the \emph{interval scheduling} problem (see surveys~\cite{kolen2007interval} and~\cite{mohan2019review}).
In the interval scheduling problem, we are given a set of jobs with specified starting times, and the goal is to design a policy such that
no two jobs overlaps. In this setup, the performance measures can focus on individual jobs;
for example, we want to run as many jobs as possible, or maximize the total weight of the accepted jobs. 
Several models consider stochastic service times; for instance, see \citet{gittins1979bandit} and \citet{shoval2018probabilistic}. 
Closer to our work is the work of \cite{chen2023stochastic} which considers stochastic service times and abandonment and reach necessary and sufficient conditions for the optimality of a strict priority policy; however, their analysis is restricted to two specific classes of probability distributions.

Exploring options with stochastic expiration (a.k.a.\ departure) has also been studied extensively in the management literature~\citep{puha2019scheduling,kim2018dynamic,zhong2022learning}. 
There is a body of work at the intersection of queuing theory and job scheduling which investigates settings like service-level differentiation~\citep{gurvich2010service}, shortest-remaining-processing-time scheduling policy~\citep{dong2021srpt}, and setup costs~\citep{hyon2012scheduling}, with the aim of designing low-regret policies.

Central to our analysis for general evaluation times is the use of LP relaxations, which have been used extensively in job scheduling \citep{im2015stochastic,mohring1999approximation}, 
stochastic knapsack \citep{blado2019relaxation,dean2008approximating, gupta2011approximation,ma2018improvements} 
and other related problems \citep{aouad2020dynamic,li2023fully}.
Closer to our work is~\cite{cygan2013catch}, where they study instances of \ssx where options expire with a geometric random time, and each option has a deterministic and uniform evaluation time. 
\cite{cygan2013catch} gives an LP-based algorithm that guarantees a $0.709$ approximation; however, their approach can only be applied when the evaluations times are uniform.
A recent work~\citep{segev2024near} provides a quasi-polynomial time approximation scheme for this problem.

Our model exhibits parallels with online bipartite-matching with reusable resources~\citep{dickerson2021allocation, goyal2022asymptotically}.
While their frameworks focus on online arrivals (an aspect absent from our model), our setting incorporates stochastic expirations, which are not captured in theirs.

The sequential search aspect of our framework is closely related to the literature on Markovian multi-armed bandits (MABs) (see, e.g.,~\citet{gittins1979bandit} and \citet{bubeck2012regret}). 
In classical MABs, each arm evolves as a Markov chain and generates rewards based on the current state when pulled, with the goal of allocating a limited number of pulls to maximize cumulative value.
Of particular relevance is the MAB superprocesses model with multi-period actions introduced by~\citet{ma2018improvements}, which generalizes traditional bandits by allowing actions that span multiple time steps. This notion of non-unit processing time closely mirrors our setting, where evaluating an option requires a random amount of time.

However, our model introduces an additional layer of complexity: options may expire during the evaluation of a selected option.
This dynamic is conceptually related to the restless bandits framework, in which arms evolve even when not played~\citep{whittle1988restless}.
The restless bandit framework has been studied extensively, including approximation algorithms in stationary settings~\citep{guha2010approximation, grunewalder2019approximations} and models incorporating irrevocability, limited availability, and costly exploration~\citep{chakrabarti2008mortal, farias2011irrevocable, traca2020reducing, shao2024multi} — features closely aligned with our setting. 
Crucially, our work unifies expiration and non-unit processing within a single approximation framework, extending beyond prior models that typically handle only one of these two complexities.


\section{LP-based Algorithms for \ssx}\label{sec:main-alg}
In this section we present and analyze the LP-based approximation algorithm for \ssx, and prove \Cref{thm:main-approx}.
To recap, a solution to \ssx is a sequential selection policy that, given a set of available options and the current time $t \leq T$, decides which option to select next (when the DM is free). 
The objective is to design a sequential selection policy to maximize the expected value obtained from selected options. In \Cref{subsec:lp_relaxation}, we present a linear program that provides an upper bound on the value obtained by any online policy.  
We use this LP to design an approximation algorithm for \ssx in \Cref{sec:approx-alg}. 
In \Cref{sec:limitations}, we present two results that
highlight the limitations of our algorithm and the LP-based analysis. 
The approximation algorithms for the extensions of \ssx are almost identical: the changes are explained in \Cref{sec:extension}.

\subsection{The Linear Programming Relaxation}\label{subsec:lp_relaxation}
We now present a \emph{time-indexed} linear program that upper bounds the value of any optimal policy for \ssx. 
Recall that we use $\overline{F}_{S_j}(t) := \Pr(S_j > t)$ and $p_j(t) := \Pr(E_j > t)$ to denote the tail distributions of $S_j$ and $E_j$, respectively. Furthermore, for each $j \in [n]$, we have $\overline{F}_{S_j}(T) = 0$ for some $T \in \mathbb{N}$, and  $p_j(t) > 0$ for all $t \geq 1$.
We define variables $x_{j, t}$ to denote the \emph{probability that a policy selects option $j$ at time $t$}. The LP is as follows:
\begin{align}
    \text{maximize} \quad & \textstyle \sum_{t =1}^{nT}\sum_{j=1}^n v_j \cdot x_{j,t}   \notag \\ 
     \text{s.t.} \ \ \forall j \in [n] \quad & \textstyle \sum_{t= 1}^{nT} \frac{x_{j, t}}{p_j(t)}\leq 1  \tag{\lp}\label{lp:LP1} \\
     \forall t \in [nT] \quad & \textstyle \sum_{\tau \leq t}\sum_{j=1}^n  \overline{F}_{S_j}(t-\tau) \cdot  x_{j, \tau} \leq 1   \notag \\
      \forall j \in [n], \forall t \in [nT] \quad &   x_{j, t} \geq 0.  \notag
\end{align}

The two sets of constraints in \ref{lp:LP1} are deduced from natural constraints that any policy for \ssx must satisfy: (1) no option can be selected more than once, and (2) at any given time, the DM can evaluate at most one option. 
We note that $t \in \{1, 2, \ldots, nT\}$ since $\overline{F}_{S_j}(T) = 0$ for $T \in \mathbb{N}$ for all $j \in [n]$.
The LP has a total of $O(n^2T)$ variables and constraints, and is therefore solvable in polynomial time provided that $T$ is polynomial in $n$.
We begin by showing that \ref{lp:LP1} provides an upper bound on the optimum. 

\begin{theorem}\label{thm:main-upperbound-opt}
    Given an instance of sequential selection with expiration, \ref{lp:LP1} upper bounds the expected value of the optimal online policy. 
\end{theorem}

\begin{proof}
Let $\Pi$ denote the optimal online policy for the given instance of \ssx. Let $x^*_{j, t}$ denote the probability that $\Pi$ selects option $j$ at time $t$. Then, the objective value of $\x^* :=\{x^*_{j,t}\}_{j \in [n], t \in [T]}$ is $\sum_{t= 1}^{nT} \sum_{j=1}^n v_j \cdot x^*_{j, t}$, which is equal to the expected value obtained by the optimal policy $\Pi$. 
To finish the proof, 
we need to show that $\x^*$ is feasible for \ref{lp:LP1}. 
Towards this end, let $Y_{j, t}$ denote the event that $\Pi$ selects option $j$ at time $t$.
Fix $j \in [n]$ and $t\geq 1$. Since an option can only be selected if it is still available, we have
\begin{align*}
  \textstyle  \frac{x^*_{j,t}}{p_j(t)} \,\, & = \,\, \textstyle  \Pr( Y_{j, t} \mid E_j > t)
\,\, = \,\, 1 - \Pr( \overline{Y}_{j, t} \mid E_j > t) \,\, \leq \,\, 
      1 - \sum_{\tau < t} \Pr(Y_{j, \tau} \mid E_j > t) \\
    & = \,\, \textstyle 1-\sum_{\tau < t} \Pr(Y_{j, \tau} \mid E_j > \tau) 
    \,\, = \,\, 1- \sum_{\tau < t} \frac{x^*_{j,\tau}}{p_j(\tau)},
\end{align*}
where the first inequality holds since $\Pi$ can select option $j$ only once, and
the penultimate equality uses the fact that the policy's decision at time $\tau$ depends only on information up to time $\tau$, so conditioning on $E_j > t$ is equivalent to conditioning on $E_j > \tau$ for $\tau < t$. On-rearranging the above inequality (and applying it to $t = nT$), we conclude that $\x^*$ satisfies the first set of constraints of \ref{lp:LP1}.
To show that $\x^*$ satisfies the second set of constraints, we begin by defining $\mathbb{I}^{\ \text{start}}_{j, t}$ as an indicator random variable that is $1$ if $\Pi$ selects option $j$ at time $t$, 
and $0$ otherwise. 
At any given time $t \geq 1$, the DM can either (i) select an available option, or (ii) continue evaluating an option selected earlier. 
In the second case, option $j$ ``blocks'' time $t$ if it was selected at time $\tau$ {\it and} its evaluation time was greater than $(t-\tau)$: let $S_{j, t-\tau}$ be an indicator denoting this quantity. 
Thus, for any $t \geq 1$, we have $\sum_{j \in [n]}\mathbb{I}^{\ \text{start}}_{j, t} + \sum_{\tau < t}\sum_{j \in [n]}  \mathbb{I}^{\ \text{start}}_{j, \tau} \cdot S_{j, t-\tau} \leq 1$. On taking expectations, we get
\[
\sum_{j \in [n]}\E[\mathbb{I}^{\ \text{start}}_{j, t}] + \sum_{\tau < t}\sum_{j \in [n]}  \E[\mathbb{I}^{\ \text{start}}_{j, \tau} \cdot S_{j, t-\tau}] \,\, = \,\, \sum_{j \in [n]}x^*_{j, t} + \sum_{\tau < t}\sum_{j \in [n]}  x^*_{j, \tau} \cdot \overline{F}_{S_j}(t-\tau) \,\, \leq \,\, 1
\]
where the equality uses the fact that the evaluation times are independent of the decisions made by $\Pi$.
Thus, $\x^*$ is a feasible solution for \ref{lp:LP1}, which completes the proof.
\end{proof}

\subsection{The Algorithm and its Analysis} \label{sec:approx-alg}
In this section, we describe and analyze our approximation algorithm for \ssx. 
Let $ \x^*$ denote the optimal solution of \ref{lp:LP1} for a given instance of \ssx. 
At a high level, our algorithm uses the LP solution $\mathbf{x}^*$ to guide option selection whenever the DM is free.
Specifically, if the DM is free at time $t$, the algorithm constructs a \emph{consideration set} $\mathcal{C}^*=\mathcal{C}^*(t)$ consisting of unexpired options that have not been previously considered. 
The algorithm then selects the highest-valued option from $\mathcal{C}^*$, if the set is non-empty.
Before describing how options are added to $\C^*$, we define the following events for all $j \in [n]$ and $t \geq 1$.
\begin{itemize}
    \item $\eA_{j, t}$: option $j$ is available (i.e., unexpired) at time $t$

    \item $\SF_t$: DM is free at time $t$

    \item $\NC_{j,t}$: option $j$ has not been considered before time $t$
\end{itemize}
Using this, we define $f_{j, t} = \Pr(\NC_{j, t}, \, \SF_t \mid \eA_{j, t})$. In words, $f_{j, t}$ represents the probability that option $j$ has not been considered before time $t$ and that the DM is free at time $t$, conditioned on $j$ being available at that time. 
Then, when the DM is free at time $t$, 
the algorithm adds option $j$ to $\C^*$ independently with probability $\textstyle \frac{x_{j,t}^*}{2\cdot p_j(t)\cdot f_{j,t}}$, provided that $j$ is available and has not been previously considered.
We note that the $f_{j,t}$ values can be computed based on the decision paths of the algorithm up to time $t$. For simplicity, we assume that these values are provided as input to the algorithm (see \Cref{remark:polytime} for further discussion).
See Algorithm~\ref{alg:random-serve} for a formal description of this procedure. 
The main result of this section is as follows.
\begin{theorem}\label{thm:appx-bound}
    \Cref{alg:random-serve} achieves a $\textstyle \frac{1}{2}\cdot \left(1-\frac{1}{e}\right)$-approximation for the sequential selection with expiration problem.
\end{theorem}

\begin{remark}\label{remark:polytime}
    Assuming that the $f_{j, t}$ values can be computed in polynomial time, it is clear that our algorithm runs in polynomial time. 
    However, computing $f_{j,t}$ may involve a sum over exponentially many decision paths, which precludes a polynomial-time implementation.
    To address this, we show in \Cref{app:sampling} that the $f_{j,t}$ values can be efficiently approximated via sampling. Specifically, we prove that $\text{poly}(n, 1/\epsilon)$ samples suffice to estimate each $f_{j,t}$ up to an additive error of $\epsilon$, and our analysis remains valid even when these estimates are used in place of exact values.
    So, the final runtime of our algorithm is poly$\textstyle (n, T, \frac{1}{\epsilon})$, at the cost of an arbitrarily small loss in the objective value.
    Combining this result with \Cref{thm:appx-bound} establishes \Cref{thm:main-approx}. In the following analysis we will assume that the $f_{j, t}$ values are computed exactly. 
\end{remark}

\begin{algorithm}
\caption{An LP-based algorithm for \ssx}
\label{alg:random-serve}
\begin{algorithmic}[1]
\State \textbf{Input:} \ssx instance: value $v_j$, expiration time distribution $p_j(\cdot)$ and evaluation time distribution $\overline{F}_{j}(\cdot)$ for all $j \in [n]$; probabilities $\{f_{j,t}\}_{j \in [n], t \geq 1}$ 
\State $\mathbf{x}^* \gets $ optimal solution to \ref{lp:LP1}
\State $\C \gets [n]$
\For{$t=1,\ldots$}
    \If{DM is free}
        \State $\C^* \gets \emptyset$.
        \For{$j \in \C$}
        \State add $j$ to $\C^*$ w.p. $\textstyle \frac{x^*_{j, t}}{2 \cdot p_j(t) \cdot f_{j,t}}$
        \EndFor
        \State $j_t \gets \arg \max_{j \in \C^*} \{v_j\}$
        \State \textbf{select} $j_t$
    \EndIf
    \State \textbf{update} $\C:$ remove options $\in \C^*$ and expired options
\EndFor
\end{algorithmic}
\end{algorithm}

\paragraph{Proof of \Cref{thm:appx-bound}.}
We begin the proof by showing that \Cref{alg:random-serve} is feasible. 
To prove feasibility, we need to show that  $\textstyle \frac{x^*_{j, t}}{2 \cdot p_j(t) \cdot f_{j,t}} \leq 1$ for all $j \in [n]$ and $t \geq 1$.
The following result provides the key ingredient needed to establish this bound.
\ignore{We prove this statement by induction on $t$. 
By definition $f_{j,1}=1$ and $p_j(1) = 1$ for all $j \in [n]$; hence $\textstyle \frac{x_{j,1}^*}{2 \cdot p_{j}(1) \cdot f_{j,1}} \leq 1$. 
Let $t \geq 2$, and suppose that $\textstyle \frac{x_{j,\tau}^*}{2 \cdot p_j(\tau) \cdot f_{j,\tau}} \leq 1$ for all $\tau \leq t-1$. To complete the proof, we need to show that 
$\textstyle \frac{x^*_{j, t}}{2 \cdot p_j(t) \cdot f_{j,t}} \leq 1$ for all $j \in [n]$.
In order to complete the induction, we need the following result.}

\begin{claim}\label{lem:well-defined}
    Let $\run_{j,t}$ denote the event that  \Cref{alg:random-serve} selects option $j$ at time $t$ and let $\mathtt{C}_{j,t}$ denote the event that option $j$ is in $\C^*$ at time $t$. Then, for any $\tau < t$, we have:
    \begin{enumerate}
        \item $\Pr(\run_{i,\tau}\mid \eA_{j,t}) \leq x_{i,\tau}^*/2$ for all options $i \neq j$, and
        
        \item $\Pr(\mathtt{C}_{j,\tau}\mid \eA_{j,t})\leq \frac{x_{j,\tau}^*}{2\cdot p_j(\tau)}$ for all options $j \in [n]$.
    \end{enumerate}
\end{claim}
\begin{proof}
To select an option $j$ at time $\tau$, it must have been in the consideration set at time $\tau$. Furthermore, an option will not be considered at time $\tau$ if (i) it isn't still available at time $\tau$, (ii) it has already been considered before time $\tau$, or (iii) the DM is not free at time $\tau$. Thus, we get:
\begin{align}
\Pr(\run_{j, \tau}) \,\, &\leq \,\, \Pr(\mathtt{C}_{j, \tau}) \,\,  
= \,\, \Pr(\mathtt{C}_{j, \tau} \mid \eA_{j, \tau}, \SF_\tau, \NC_{j, \tau}) \cdot \Pr(\SF_\tau, \NC_{j, \tau} \mid \eA_{j, \tau}) \cdot \Pr(\eA_{j, \tau}) \notag \\
&= \,\, \frac{x^*_{j, \tau}}{2 \cdot p_j(\tau) \cdot f_{j, \tau}} \cdot f_{j, \tau} \cdot p_j(\tau) \,\, = \,\, \frac{x^*_{j, \tau}}{2} \label{eq:prob-run}
\end{align}
where the penultimate inequality follows from the algorithm design and the definition of $f_{j, \tau}$. 
Using the law of total probability, we can write 
$$\textstyle \Pr(\run_{i,\tau}) =  \textstyle \Pr(\run_{i,\tau} \mid \eA_{j, \tau}) \cdot \Pr(\eA_{j, \tau}) +  \Pr(\run_{i,\tau} \mid \overline{\eA}_{j, \tau}) \cdot \Pr(\overline{\eA}_{j, \tau}),$$
where $\overline{\eA}_{j, \tau}$ denotes the complement of $\eA_{j, \tau}$. 
To prove the first part of the claim, it suffices to show that $\Pr(\run_{i,\tau} \mid \eA_{j,\tau}) \leq \Pr(\run_{i,\tau} \mid \overline{\eA}_{j,\tau})$. 
To see this, observe that when $v_i > v_j$, the availability of option $j$ at $\tau$ does not affect the event $\run_{i, \tau}$.
On the other hand, if $v_i \leq v_j$, then there is a non-negative probability that \Cref{alg:random-serve} selects option $j$ at $\tau$ (instead of option $i$) if option $j$ was available at $\tau$. 
In either case, we have $\Pr(\run_{i,\tau} \mid \eA_{j,\tau}) \leq \Pr(\run_{i,\tau} \mid \overline{\eA}_{j,\tau})$.
So, $\Pr(\run_{i,\tau} \mid \eA_{j,\tau}) \leq \Pr(\run_{i,\tau}) \leq  {x^*_{i,\tau}}/{2}$, by \eqref{eq:prob-run}. 
Finally, note that
the event $\run_{i,\tau}$ only depends on the execution of the algorithm until $\tau$.
Thus, for any $t > \tau$, we have $\Pr(\run_{i,\tau} \mid \eA_{j,t}) = \Pr(\run_{i,\tau} \mid \eA_{j,\tau}) \leq {x^*_{i,\tau}}/{2}$, proving the first part of the claim.
The proof of the second part of the claim is similar, and is omitted for brevity.
\end{proof}

We now use \Cref{lem:well-defined} to complete the proof.
To do so, we consider the quantity $1 - f_{j, t}$, which can be upper bounded by the probability of events where, either (i) the DM is busy at time $t$ because of another option, or (ii) option $j$ was considered at an earlier time $\tau < t$. 
By applying the union bound, we get
\begin{align*}
    1-f_{j,t} \,\, &\leq \,\, \sum_{\tau<t} \sum_{i\neq j} \Pr(\run_{i,\tau} \mid \eA_{j,t}) \cdot \Pr(S_i>t-\tau) + \sum_{\tau<t} \Pr(\mathtt{C}_{j,\tau} \mid \eA_{j,t})\\
    & \leq \,\, \sum_{\tau<t} \sum_{i \neq j} \frac{x^*_{i,\tau}}{2} \cdot \overline{F}_{S_i}(t-\tau) + \sum_{\tau < t} \frac{x^*_{j, \tau}}{2\cdot p_j(\tau)} \,\, \leq \,\, \frac{1}{2}\left(1 + \left(1 -\frac{x^*_{j,t}}{p_j(t)}\right)\right),
\end{align*}
where the second inequality uses \Cref{lem:well-defined} and the final inequality follows from the feasibility of $\x^*$. 
Rearranging the inequality completes the argument.

Next, we prove that \Cref{alg:random-serve} achieves an expected value of at least $\frac{1}{2} \cdot \left(1 - \frac{1}{e} \right)$ times the value of the optimal LP solution $\x^*$.
Let $X_t$ be a random variable that denotes the value selected by \Cref{alg:random-serve} at time $t$, and let $\algval = \sum_{t \geq 1}\E[X_t]$. 
We will set $X_t = 0$ if the DM is busy at time $t$ (that is, no option is selected at time $t$). 
Our key lemma establishes that $\E[X_t]\geq \frac{1}{2} \cdot \left(1 - \frac{1}{e} \right) \cdot\sum_{j=1}^n v_j \cdot x_{j,t}^*$ for all $t \geq 1$.
\begin{lemma}\label{lem:key}
        The expected value collected by \Cref{alg:random-serve} at time $t$ can be bounded as follows.
        \[
        \E[X_t] \,\, \geq \,\, \frac{1}{2}\cdot \left(1 - \frac{1}{e}\right)\cdot \sum_{j=1}^n v_j \cdot x^*_{j, t}.
        \]
    \end{lemma}
    We note that this immediately completes the proof of \Cref{thm:appx-bound} by summing over $t$. 
    \ignore{get 
    \[
    \mathtt{ALG} \,\, = \,\, \sum_{t \geq 1} \E[X_t] \,\, \geq \,\, \left(1 - \frac{1}{e}\right)\cdot \sum_{t \geq 1}\sum_{j=1}^n \frac{v_j \cdot x^*_{j, t}}{2 \cdot \Pr(\SF_t)} \geq \frac{1}{2} \cdot \left(1 - \frac{1}{e}\right) \sum_{t=1}^{nT}\sum_{j=1}^n v_j \cdot x^*_{j, t}.
    \]
    Since \ref{lp:LP1} upper bounds the optimal value, this completes the proof of \Cref{thm:appx-bound}.} \hfill \qedsymbol

\subsubsection{Proof of \Cref{lem:key}}
We finish up this subsection with a proof of \Cref{lem:key}.
We begin by establishing that, at every time step $t \geq 1$, the DM is free with probability at least $\frac{1}{2}$. This is formalized in the following claim.
    
\begin{claim}\label{prop:SF_t>1/2}
    For each $t\geq 1$, $\Pr(\SF_t)\geq 1/2$.
\end{claim}
\begin{proof}
    The result is clearly true for $t = 1$ since the DM is free at the start of the process. For $t > 1$, we analyze the probability that the DM is busy at time $t$.
    The DM can be busy at time $t$ only if it began evaluating some option $i \in [n]$ at an earlier time $\tau < t$ and the evaluation has not yet completed by time $t$. Using the union bound, we have:
    \begin{align*}
        1- \Pr(\SF_t) \,\, & \leq \,\, \sum_{\tau < t} \sum_{j=1}^n \Pr(\run_{j,\tau}) \cdot \overline{F}_{S_j}(t-\tau) \,\, \leq \,\, \sum_{\tau < t}\sum_{j=1}^n \frac{x_{j,\tau}^*}{2} \cdot\overline{F}_{S_j}(t-\tau) \,\, \leq \,\, \frac{1}{2}.
    \end{align*}
    Recall that $\run_{j,t}$ denotes the event that \Cref{alg:random-serve} selects option $j$ at time $t$. So, the second inequality follows from \eqref{eq:prob-run}, and the last inequality follows from the feasibility of $\x^*$ for \ref{lp:LP1}. 
\end{proof}

We are now ready to bound the expected value obtained by \Cref{alg:random-serve} at time $t$.
Recall that we set $X_t = 0$ if the DM is busy at time $t$ (that is, no option is selected at time $t$), and that we want to show 
$\E[X_t]\geq \frac{1}{2} \cdot \left(1 - \frac{1}{e} \right) \cdot\sum_{j=1}^n v_j x_{j,t}^*$ for all $t \geq 1$.
Without loss of generality, suppose that $v_1\geq v_2 \geq \cdots \geq v_n$. Then, $X_t=v_j$ if (i) the DM is free at time $t$, (ii) option $j$ is considered at time $t$ and, (iii) no option $i$ such that $i < j$ is considered at time $t$. 
Recall that $\mathtt{C}_{j,t}$ refers to the event that option $j$ is considered by $\ALG$ at time $t$ and denote $\overline{\mathtt{C}}_{j,t}$ as its complement event. Then, 
    \begin{align*}
        \Pr(X_t=v_j) \,\, &= \,\, \Pr(\SF_{t} \cap \mathtt{C}_{j,t} \cap \{ \forall i < j: \overline{\mathtt{C}}_{i,t} \})
         \,\, = \,\, \Pr(\SF_t) \cdot \Pr(\mathtt{C}_{j,t} \cap \{ \forall i < j: \overline{\mathtt{C}}_{i,t} \} \mid \SF_{t})\\
        & = \,\, \Pr(\SF_t) \cdot \Pr(\mathtt{C}_{j,t}\mid \SF_{t}) \cdot \prod_{i<j}\left(1-\Pr(\mathtt{C}_{i,t}\mid \SF_t)\right) \,\, = \,\, \Pr(\SF_t) \cdot \alpha_{j,t} \cdot\prod_{i< j} (1-\alpha_{i,t}),
    \end{align*}
    where the penultimate equality follows from the independence of events $C_{j, t}$ for all $j \in [n]$, and the final equality by defining $\alpha_{j, t} = \Pr(\mathtt{C}_{j,t}\mid \SF_t)$. 
    We proceed to use this notation to complete the proof of \Cref{lem:key}.
    
    \begin{proof}[Proof of \Cref{lem:key}]
    We begin by simplifying $\alpha_{j, t}$.
    Observe that by conditioning on the events $\eA_{j, t}$ and $\NC_{j, t}$, we get
    \begin{align*}
    \textstyle\alpha_{j,t} \,\, 
    &= \,\, \Pr(\mathtt{C}_{j,t} \mid \SF_t, \eA_{j,t}, \NC_{j,t}) \cdot \Pr(\NC_{j,t} \mid \SF_t, \eA_{j,t}) \cdot \Pr(\eA_{j,t}) \\
    &= \,\, \frac{x^*_{j,t}}{2\cdot f_{j,t}\cdot \Pr(\eA_{j,t})} \cdot \frac{\Pr(\NC_{j,t} , \SF_t, \eA_{j,t})}{\Pr(\SF_t, \eA_{j,t})} \cdot \Pr(\eA_{j,t}) \,\, = \,\, \frac{x^*_{j,t}}{2\cdot f_{j,t}} \cdot \frac{\Pr(\NC_{j,t} , \SF_t, \eA_{j,t})}{\Pr(\eA_{j,t})\cdot \Pr(\SF_t)} \\
    & = \frac{x^*_{j,t}}{2\cdot f_{j,t}} \cdot \frac{\Pr(\NC_{j,t} , \SF_t \mid \eA_{j,t})}{\Pr(\SF_t)} \,\, = \,\, \frac{x^*_{j,t}}{2 \cdot \Pr(\SF_t)},
    \end{align*}
    where the third equality uses the independence of $\eA_{j, t}$ and $\SF_t$.

    Furthermore, by the assumption that $v_1 \geq v_2 \geq  \cdots \geq v_n\geq 0$, there exist non-negative values $u_1, \ldots, u_n\geq 0$ such that we can write $v_j = \sum_{\ell = j}^n u_{\ell}$ for all $j \in [n]$. Using $X_t$ to denote the value of the highest valued available option at time $t$, we have:
    \begin{align*}
    \frac{\E[X_t]}{\sum_{j=1}^{n} \alpha_{j,t} v_j} \,\, &= \,\, \frac{\sum_{j=1}^{n} \alpha_{j,t} \prod_{i < j} (1 - \alpha_{i,t})  \cdot v_j} {\sum_{i=j}^{n} \alpha_{j,t} v_j} \,\, = \,\, \frac{\sum_{j=1}^{n} \alpha_{j,t} \prod_{i < j} (1-\alpha_{i,t})  \sum_{\ell=j}^{n} u_{\ell}}{\sum_{i=j}^{n} \alpha_{j,t} \sum_{\ell=j}^{n} u_{\ell}} \\
    &= \,\, \frac{\sum_{\ell=1}^{n} u_{\ell} \sum_{j=1}^{\ell} \alpha_{j,t} \prod_{i < j} (1-\alpha_{i,t}) }{\sum_{\ell=1}^{n} u_{\ell} \sum_{j=1}^{\ell} \alpha_{j,t}} \,\, \geq \,\, \min_{\ell: u_{\ell} \neq 0} \frac{u_\ell \sum_{j=1}^{\ell} \alpha_{j,t} \prod_{i < j} (1-\alpha_{i,t}) }{u_{\ell} \sum_{j=1}^{\ell} \alpha_{j,t}}\\
    & \geq \,\, \min_{\ell: u_{\ell} \neq 0} \frac{1- e^{-\sum_{i=1}^{\ell}\alpha_{i,t}}}{\sum_{j=1}^{\ell} \alpha_{j,t}} \,\, = \,\, \frac{1-e^{-\alpha_t}}{\alpha_t} \geq 1-\frac{1}{e},
    \end{align*}
    where the first inequality follows from $1+x \leq e^x$ for all $x$, the penultimate equality
    follows from the the fact that $f(x) = (1-e^{-x})/x$ is decreasing in $[0, 1]$, and final inequality comes from $\alpha_t \leq 1$.  
    Thus, we can conclude that 
    \[
    \E[X_t] = \sum_{j=1}^n v_j \alpha_{j,t}\prod_{i<j}(1-\alpha_{i,t}) \geq (1-1/e) \cdot \sum_{j=1}^n\alpha_{j,t} v_j = (1-1/e)\sum_{j=1}^n x_{j,t}^* \cdot v_{j} /(2\Pr(\SF_t)).
    \]
    \end{proof}

\subsection{Limitations of Our Algorithm and Analysis}\label{sec:limitations}
In this section, we present two results that
highlight the limitations of our algorithm and LP-based analysis. 
First, we show in \Cref{lem:approx_hard} that our approximation guarantee is nearly tight: we construct an instance in which $\ALG$ achieves at most a $(1 - 1/\sqrt{e}) \approx 0.393$ fraction of the LP optimum, demonstrating that our analysis cannot be significantly improved in general.
Second, in \Cref{lem:lp-gap}, we provide a bound on the integrality gap of \ref{lp:LP1}, establishing a fundamental limit on what any LP-based algorithm that uses \ref{lp:LP1} can achieve.

\begin{lemma}\label{lem:approx_hard}
    There exists a sequence of instances such that, as the number of options $n \to \infty$, the expected value obtained by \Cref{alg:random-serve} is at most $\left(1 - \frac{1}{\sqrt{e}}\right)$ times the value of \ref{lp:LP1}.
\end{lemma}
\begin{proof}
Consider the following instance of \ssx with $n$ identical options. Each option $j \in [n]$ has value $v_j = 1$, expiration time $E_j = 2$ with probability $1$, and service time $S_j = 1$ with probability $1$. 
Observe that setting $x^*_{j,1} = 1/n$ for all $j \in [n]$ is a feasible (and optimal) solution to~\ref{lp:LP1} with total objective value equal to $1$.

Since all options expire at time $2$, \Cref{alg:random-serve} can only collect value at time $t = 1$. At this time, the DM is free with probability $1$, and both $p_j(1)$ and $f_{j,1}$ equal $1$ for all $j \in [n]$. Therefore, each option $j$ is added to the consideration set independently with probability $x^*_{j,1} / (2 \cdot p_j(1) \cdot f_{j,1}) = 1/(2n)$.
Let $\mathcal{C}^*(1)$ denote the consideration set at time $1$. The probability that at least one option is added to $\mathcal{C}^*(1)$ is:
\[
\Pr(\mathcal{C}^*(1) \neq \emptyset) = 1 - \left(1 - \frac{1}{2n}\right)^n \xrightarrow[n \to \infty]{} 1 - \frac{1}{\sqrt{e}}.
\]
Since the algorithm selects an option and gains value only if $\mathcal{C}^*(1)$ is non-empty, the expected value obtained by \Cref{alg:random-serve} satisfies:
\(\ALG \to 1 - \frac{1}{\sqrt{e}},\) completing the proof.
\end{proof}
 
\begin{lemma}\label{lem:lp-gap}
    The integrality gap of \ref{lp:LP1} is at least $\left(1 - \frac{1}{e}\right)$. Formally, there exists a family of instances such that, as the instance size grows, the optimal value is at most $\left(1 - \frac{1}{e}\right)$ times the value of \ref{lp:LP1}.
\end{lemma}
\begin{proof}
Let $n$ be sufficiently large, and let $m < n$ be a fixed large constant such that $m$ divides $n$.
We consider the following instance: there are $n$ options partitioned into $m$ distinct types, with each type containing exactly $n/m$ options. All options have equal value, with $v_j = 1$ for all $j \in [n]$. For each option $j$ of type $k \in [m]$, we have:
\begin{equation*}
    S_{j} = 2^{k-1} \text{ with probability } 1\hspace{0.1cm} \text{ and } \hspace{0.1cm}
  E_{j} =
    \begin{cases}
      2 & \text{with probability $1-m/n$}\\
      2^{k-1}+2 & \text{with probability $m/n$}\\
    \end{cases}.
\end{equation*}
Consider the following solution: for options $j$ of type $k$, set $x_{j,2^{k-1}} = m/n$. 
It is easy to verify that this is a feasible solution to \ref{lp:LP1} and has value $m$.
Therefore, the value of~\ref{lp:LP1} is at least $m$.

We now compute the optimal value, denoted  $\OPT$, for this instance. 
By construction, each option of type $k$ expires either at time $t = 2$ or at time $t = 2^{k-1} + 2$. Therefore, after time $1$, the DM knows exactly which options have expired and which remain available for future selection. Moreover, from each type, at most two options can be selected: one before expiration at $t=1$, and potentially one more at its later expiration time. 
Since options of lower type have smaller processing times and earlier expiration, the optimal policy is to select options in increasing order of type. Specifically:
\begin{itemize}
    \item At time $t = 1$, the DM selects an option of type $1$.
    \item At time $t = 2$, with probability $1 - (1 - \frac{m}{n})^{n/m - 1}$, at least one other option of type $1$ remains.
    \item For each type $k = 2, \dots, m$, with probability $1 - (1 - \frac{m}{n})^{n/m}$, at least one option of that type remains available beyond time $t=2$.
    So, at time $t = 2^{k-1}+1$, an option of type $k$ will be selected.
\end{itemize}
Hence, the expected total reward is:
\[
\OPT = 1 + \left( 1- \left( 1- \frac{m}{n} \right)^{n/m-1} \right)+ (m-1)\left( 1- \left( 1- \frac{m}{n} \right)^{n/m} \right)
\]
Taking the limit as $n \to \infty$, we obtain: $\OPT \to 1+m(1-1/e)$. 
Combined with the fact that the value of~\ref{lp:LP1} is at least $m$, and letting $m \to \infty$, we conclude that the integrality gap approaches $1 - 1/e$, which completes the proof.
\end{proof}

\subsection{Constrained Sequential Selection with Expiration} \label{sec:extension}
We now describe the changes needed to handle the constrained versions of \ssx. 
We focus on two types of constraints: (1) \emph{deadlines}, where each option must be completed by a specified time to obtain value , and (2) a \emph{global knapsack constraint}, where the total weight of selected options must not exceed a given capacity. 
The main change required to handle these variants lies in modifying the LP that upper bounds the performance of the optimal online policy.
Specifically, we augment the LP with additional constraints to reflect feasibility under deadlines or knapsack limits, while maintaining the structure necessary for algorithmic rounding.

We begin by considering the first extension, in which each option is associated with a deadline.
Recall that in this setting, each option $j \in [n]$ is associated with a deadline $D_j$, and its value is obtained only if the evaluation of the option completes by time $D_j$. Letting $D_{\max} = \max_{j \in [n]} D_j$, the LP is as follows:
\ignore{\begin{align}
\text{maximize} \quad &\sum_{j=1}^n \sum_{t= 1}^{D_{j}}  \Pr(S_j \leq D_j -t) x_{j, t} v_j & \tag{\lpddl}\label{lp:LP2} \\ 
\text{subject to} \quad &\sum_{t = 1}^{D_j} \frac{x_{j, t}}{p_j(t)}\leq 1 & \forall j \in [n] \tag{\lpddla} \label{eq:LP2(a)} \\
 \quad & \sum_{j=1}^n \sum_{\tau=1}^{\min\{ t,D_j \}}  x_{j, \tau} \overline{F}_{S_j}(t-\tau) \leq 1  &  \forall t \in [1,\max_{j\in [n]} D_j] \tag{\lpddlb} \label{eq:LP2(b)}\\
 \quad &x_{j, t} \geq 0 & \forall j \in [n], t \in [1,D_{j}] \notag
\end{align}
}
\begin{align}
    \text{maximize} \quad & \textstyle \sum_{j=1}^n \sum_{t=1}^{D_j} \Pr(S_j \leq D_j - t)\cdot v_j \cdot x_{j,t}   \notag \\ 
     \text{s.t.} \ \ \forall j \in [n] \quad & \textstyle \sum_{t= 1}^{D_j} \frac{x_{j, t}}{p_j(t)}\leq 1  \tag{\lpddl}\label{lp:LP2} \\
     \forall t \in [D_{\max}] \quad & \textstyle \sum_{j=1}^n\sum_{\tau \leq \min\{t, D_j\}}  \overline{F}_{S_j}(t-\tau) \cdot  x_{j, \tau} \leq 1   \notag \\
      \forall j \in [n], \forall t \in [D_j] \quad &   x_{j, t} \geq 0.  \notag
\end{align}
The LP has a total of $O(n D_{\max})$ variables and constraints, and is therefore solvable in polynomial time provided that $D_{\max}$ is polynomial in $n$.
Similar to \Cref{thm:main-upperbound-opt}, we can show that \ref{lp:LP2} provides an upper bound on the optimum. 
The proof is similar to Theorem~\ref{thm:main-upperbound-opt}; we omit it for brevity.

\begin{theorem}\label{thm:main-dl-upperbound-opt}
    Given an instance of sequential selection with expiration with deadlines, \ref{lp:LP2} upper bounds the expected value of the optimal online policy. 
\end{theorem}

We apply \Cref{alg:random-serve} to this variant, and using an analysis identical to that in \Cref{sec:approx-alg}, we obtain the following.
\begin{theorem}\label{thm:appx-bound-deadlines}
    \Cref{alg:random-serve} achieves a $\textstyle \frac{1}{2}\cdot \left(1-\frac{1}{e}\right)$-approximation for \ssx with deadlines.
\end{theorem}

Our second extension considers a global knapsack constraint, where the total weight of selected options must not exceed a given capacity. Recall that in this setting, each option $j$ has a weight $w_j\geq 0$, and the total weight of selected options cannot exceed a given capacity $W$. 
Similar to \ssx with deadlines, we show that the results developed for the vanilla \ssx model extend naturally to this variant.
In particular, we modify the LP relaxation \ref{lp:LP1} by adding the following knapsack constraint:
\[
    \textstyle \sum_{t=1}^{nT}\sum_{j=1}^n w_j \cdot x_{j, t} \leq W.
\]
The resulting LP continues to serve as a valid upper bound on the expected value of the optimal online policy.
Again, we apply \Cref{alg:random-serve} to this variant. 
The analysis is identical except one change.
We declare the algorithm a ``failure'' (and collect no value) if the total weight of the selected options exceeds $W$. 
From \Cref{eq:prob-run}, the probability that option $j$ is selected at time step $t$ is at most $x^*_{j, t}/2$. Thus, the expected weight of selected options in an independent execution of \Cref{alg:random-serve} is at most $\sum_{t \geq 1}\sum_{j=1}^n w_j \cdot (x^*_{j, t})/2 \leq W/2$. 
By a Chernoff bound, the probability of failure is at most $e^{-\Omega(W^2/nw_{\max})}$ where $w_{\max} = \max_j w_j$.
The main theorem is as follows.
\begin{theorem}\label{thm:appx-bound-knapsack}
    \Cref{alg:random-serve} achieves a $\textstyle (1 - e^{-\Omega(W^2/nw_{\max})}) \cdot \frac{1}{2}\cdot \left(1-\frac{1}{e}\right)$-approximation for \ssx with a knapsack constraint.
\end{theorem}

As with \Cref{thm:appx-bound}, both of the above results assume access to exact $f_{j, t}$ values.
However, using a similar sampling-based analysis as described in \Cref{app:sampling} establishes \Cref{thm:main-deadlines} and \Cref{thm:main-knapsack}.


\section{Greedy Policies for I.I.D. Evaluation Times}{\label{greedy_section}}

In this section, we prove Theorem~\ref{thm:greedy} and show that the greedy policy, denoted henceforth as $\ALG_{Greedy}$, that always chooses to evaluate the available option of largest value guarantees at least a $1/2$-approximation relative to the online optimum when the evaluations times are independent and identically distributed. 
We define some notation to simplify the exposition in this section. We use $\OPT$ and $v(\OPT)$ to denote the optimal policy and its expected value in this section. In general, we use $v(\cdot)$ to denote the expected value of a given policy.

The formal argument for our proof requires using the full framework of MDPs, and we defer these details to Appendix~\ref{app:greedy_analysis}. 
Here, we give a intuitive sketch of our proof, which roughly involves two steps.
\begin{enumerate}
    \item We begin by iteratively modifying $\OPT$ and deriving an intermediate policy that behaves ``almost'' like $\ALG_{Greedy}$ in every step and has expected value that is at least half of $v(\OPT)$.

    \item Then, we show that $\ALG_{Greedy}$ outperforms the intermediate policy developed in Step 1; thereby proving the result.
\end{enumerate}

\paragraph{Step 1: Designing the intermediate policy.} We iteratively construct a sequence of policies $\ALG^0, \ldots, \ALG^t, \ldots$, starting with $\ALG^0 = \OPT$, where each $\ALG^{t}$ for $t \ge 1$ is obtained from $\ALG^{t-1}$ by making $\ALG^{t-1}$ behave greedily at time $t$. The construction is intuitive; however, a formal argument is more involved and is deferred to Appendix~\ref{app:greedy_analysis}.

We now describe how we construct $\ALG^t$ for $t \geq 1$ given $\ALG^{t-1}$. 
The idea is simple:
(i) Follow $\ALG^{t-1}$ as is for every time $\tau \neq t$. (ii) For time $\tau = t$, if the DM is free, choose the available option of largest value, say $j^*$. (iii) If for some time $\tau > t$, $\ALG^{t-1}$ evaluates option $j^*$, then, $\ALG^t$ will \emph{simulate} evaluating it by taking a random evaluation time $S_{j^*}$.
Clearly, policy $\ALG^t$ is well-defined. Furthermore, the following result quantifies the difference in values for $\ALG^t$ and $\OPT$.
\begin{lemma}\label{fact:general_alg_t}
    Let $v(\ALG^t[t+1])$ denote the expected value obtained by $\ALG^t$ from time $t+1$ onward.
    Then, for any $t\geq 1$, we have
\begin{equation*}\label{greedy_iid_punchline}
    2\cdot \mathbb{E}\left[\sum_{\tau\leq t} v_{i_\tau} \right] + v(\ALG^t[t+1]) \,\, \geq \,\, v(\OPT),
\end{equation*}
where $i_1,\ldots,i_t$ are the indices of options being evaluated by $\ALG^t$.
\end{lemma}
The lemma states that twice the total value of the options chosen up to time $t$, plus the value obtained from time $t+1$ onward by $\ALG^t$, provides an upper bound on the optimal value. 
This is easy to see for $t=1$, since in this case $j^*$ is the index of the highest-valued option, and we have
\begin{align}
v(\OPT) \le v_{j_1}+ v(\ALG^1[2]) + v_{j^*}\Pr(\OPT \text{ chooses $j^*$ at some point}) \le v(\ALG^1[2]) + 2\cdot v_{j^*}, \label{eq:bound_alg1}
\end{align}
where $j_1$ denotes the first item selected by $\OPT$.
For $t \ge 2$, however, $j^*$ in the definition of $\ALG^t$ depends on the history. Hence, a formal proof of \Cref{fact:general_alg_t} requires carefully defining the state and action spaces for policies and explicitly describing how policy $\ALG^{t}$ is constructed from $\ALG^{t-1}$ through these definitions. The proof also relies on coupling the decision times of $\OPT$ and $\ALG^{t}$, which is possible because the evaluation times are i.i.d. The formal construction of $\ALG^t$ and the proof of \Cref{fact:general_alg_t} are presented in Appendix~\ref{app:greedy_analysis}.
However, as a consequence of \Cref{fact:general_alg_t}, we can define the intermediate policy $\ALG'=\ALG^{T}$ and conclude that $v(\ALG')\geq \frac{1}{2}\cdot v(\OPT)$.

\paragraph{Step 2: Comparing the intermediate policy with $\ALG_{Greedy}$.}
By construction, 
the intermediate policy $\ALG'$ behaves greedily when selecting an option.
Moreover, whenever the intermediate policy is simulating an option, it receives no value. 
In contrast, $\ALG_{Greedy}$ always chooses an available option of largest value whenever possible. 
Therefore, it follows immediately that $v(\ALG_{Greedy}) \ge v(\ALG')$. Combining this with the previous bound completes the proof of \Cref{thm:greedy}. 


\newcommand{\conset}
{\textsc{ConSet}\xspace}
\newcommand{\safe}{\textsc{Safe}\xspace}
\newcommand{\greedy}{\textsc{Greedy}\xspace}
\newcommand{\ouralg}{\textsc{SimAlg}\xspace}
\newcommand{\ur}{\textsc{UR}\xspace}
\newcommand{\batch}{\textsc{ENS}\xspace}

\section{Computational Results}\label{sec:num_exp}
In this section, we provide a summary of our computational experiments. 
We consider two classes of instances: one constructed using real-world data on the performance and cost of large language models (LLMs), and another based on a classical call center scheduling dataset.
Before presenting our empirical findings, we begin by detailing the various algorithms we compare in our experiments.

\medskip
\noindent \textbf{Algorithms.}
We run five different algorithms on our computational setup. 
The first algorithm is our LP-based algorithm executed using simulations: we denote it as \ouralg  (see \Cref{sec:approx-alg} for the details). 
As computing the $f_{j,t}$ values require simulations, we test a heuristic variant, denoted \conset, 
that includes each option $j$ into the consideration set with probability $x^*_{j,t}/[p_j(t)\cdot(1-\sum_{\tau<t}x^*_{j,\tau}/p_j(\tau))]$ (the scaling ensures that the probabilities are valid).
While our theoretical analysis requires options to be only considered once, we implement both \ouralg and \conset by allowing reconsideration of previously considered options. 
This is more natural in practical settings and improves empirical performance.

As baselines, we evaluate two additional methods.
The first is \safe, an LP-based sampling algorithm originally proposed in the context of online matching~\citep{dickerson2021allocation}. 
The second is a natural greedy algorithm that always selects the highest-value available option whenever the DM is free.
Finally, we compare against a variant of the efficient nonmyopic search (\batch) heuristic from \citet{jiang2019cost}, tailored to our setting as described in \Cref{app:comp-sec}. 
At a high level, this algorithm estimates the marginal value of selecting an option by combining immediate reward and an approximate look-ahead value based on residual budget and job availability.

\smallskip

We conducted all of our experiments using Python~3.7.6 with Gurobi~10.0.3 and executed on a 2.4\,GHz Quad-Core Intel Core~i5 with 8\,GB 2.4\,MHz LPDDR3 memory.

\subsection{Active Search Instance}\label{sec:active-learning}

\paragraph{Instances.} 
In order to construct our test instances, we first construct a dataset from a public Large Language Model (LLM) leaderboard reporting performance on the GPQA benchmark.\footnote{This is a highly complex benchmark that evaluates reasoning quality and reliability across biology, physics, and chemistry. Data accessed October 12, 2025. \url{https://www.vellum.ai/llm-leaderboard}.} 
The dataset comprises 26 models, with an entry for each model capturing key features like token processing times, latency, accuracy, etc. 

Using this dataset, we create an instance of \ssx as follows. 
We begin by selecting $\ell \in \{1, \ldots, 10\}$ and create an instance that has $\ell$ options corresponding to each model for a total of $26\ell$ options.
For each option $ j$, the corresponding GPQA accuracy is recorded as its value.
We set the service time of each option by combining the token-processing time and latency of the underlying model as follows.
We assume a workload of $2{,}000$ tokens per request, so token-processing time is $T_{\text{tok}}=2000/v$ for speed $v$ (tokens/s), with empirical summary $\mu_s=21$ seconds, while latency ($T^j_{\text{lat}}$) for each model has $\mu_\ell=7.4$ seconds. 
For each model $j$, we fit Gaussians $\mathcal{N}(T^j_{\text{tok}},\sigma_s^2)$ and $L\!\sim\!\mathcal{N}(T^j_{\text{lat}},\sigma_\ell^2)$, then discretize each to a positive integer-valued PMF via truncation and normalization of weights $w(k)\!\propto\!\exp\!\big(-\tfrac12((k-\mu)/\sigma)^2\big)$. 
This is a standard technique used for estimating such quantities; see for example, \citet{canonne2020discrete}, \citet{ELAFFENDI199235}, and \citet{huggins2010inventory}.

Thus, for each model $j$, using constructed PMF $p_j^{\text{tok}}$ and $p_j^{\text{lat}}$ and assuming independence, we obtain the service-time distribution as their discrete convolution $p_j^{\text{svc}} = p_j^{\text{tok}} * p_j^{\text{lat}}$. 
This gives us an integer-valued PMF for the service time of each option that integrates both compute speed and latency. 
To model option expiration, we assign \emph{departure} probabilities $q_1, \ldots, q_n$ that are proportional to input cost per token $c_j$. 
This reflects the premise that, under constrained budgets, higher-cost models are less likely to be regarded as viable options. 
However, the range of the costs is prohibitively large (ranging from $\$0.07$ to $\$75$), and we find it convenient to work with a notion of ``log-normalized'' costs.
To do this, we define the log-normalized score of an option $j$ as
\[
s_j = \frac{\log c_j - \min_k \log c_k}{\max_k \log c_k - \min_k \log c_k}.
\]
Observe that $s_j \in [0, 1]$. We then map $s_j$ to a bounded departure probability via
\[
q_j = q_{\min} + (q_{\max}-q_{\min})\, s_j \quad \text{with} \quad 0<q_{\min}<q_{\max}<1.
\]
where the bounds $q_{\min}$ and $q_{\max}$ are used to avoid boundary probabilities $0$ and $1$. Given $q_j$, we consider two departure schemes. In Scheme I (\textsc{S1}), option $j$ departs independently in each period with probability $q_j$, so its survival probability over a horizon $T$ is $(1-q_j)^T$.
In Scheme II (\textsc{S2}), $q_j$ is interpreted as the probability that option $j$ departs during the full time horizon; thus, the per-period departure probability $\tilde q_j$ is chosen so that $1-(1-\tilde q_j)^T = q_j$. 

We consider three value regimes using results from LLM instance. 
In the first regime (\textsc{VR1}), we directly adopt the instance generation procedure and set the parameters as follows: $\sigma_s = 1$, $\sigma_l = 0.5$, $q_{\min} = 0.1$, and $q_{\max} = 0.95$. 
In a second variant (\textsc{VR2}), we construct values that are positively correlated with service time by scaling the service time and adding small Gaussian perturbations to induce natural variability. 
In the third (\textsc{VR3}), we correlate values with departure probabilities so that lower-valued options have smaller departure probabilities and thus remain in the system longer. 

\paragraph{Results.} For every instance, we compute the final value obtained by taking an average over $100$ independent realizations.
We normalize the value obtained by each algorithm by the LP benchmark value, and report how much of the LP value is obtained by the tested algorithms. Note that across all value regimes, the LP objective remains a valid benchmark, since item values are deterministic and do not depend on the evaluation or starting time of an option. 
Specifically, for an algorithm $\mathcal{A}$, we report:
\[
\frac{\text{average value obtained by }\mathcal{A}}{\text{value of LP benchmark}} 
\]
We begin by presenting results for the \textsc{S1} setting in Table~\ref{tab1}, which corresponds to a regime where most items depart by the end of time horizon.
This setting is unconstrained, with no deadline or cardinality constraints. Instances are indexed by their value regime and the  size parameter $\ell$. 

\begin{table}[H]
\centering
\begin{tabular}{c|c|c|c|c|c|c|}
\toprule
Instance & \textsc{SimAlg}\xspace & \textsc{ConSet}\xspace & \textsc{Safe}\xspace & \textsc{Greedy}\xspace & \textsc{\batch}\xspace \\
\midrule
$\textsc{VR1}-1$  & 0.654 (0.059) & 0.701 (0.038) & 0.806 (0.027) & 0.721 (0.047) & 0.435 (0.000) \\
$\textsc{VR1}-2$  & 0.714 (0.039) & 0.705 (0.028) & 0.776 (0.036) & 0.571 (0.027) & 0.517 (0.028) \\
$\textsc{VR1}-3$  & 0.690 (0.066) & 0.681 (0.053) & 0.799 (0.046) & 0.578 (0.020) & 0.601 (0.014) \\
$\textsc{VR1}-4$  & 0.677 (0.038) & 0.693 (0.041) & 0.818 (0.041) & 0.577 (0.022) & 0.593 (0.023) \\
$\textsc{VR1}-5$  & 0.664 (0.030) & 0.747 (0.031) & 0.739 (0.039) & 0.533 (0.011) & 0.582 (0.016) \\
$\textsc{VR1}-6$  & 0.665 (0.030) & 0.707 (0.029) & 0.815 (0.019) & 0.579 (0.019) & 0.622 (0.023) \\
$\textsc{VR1}-7$  & 0.714 (0.018) & 0.731 (0.039) & 0.796 (0.027) & 0.402 (0.007) & 0.611 (0.012) \\
$\textsc{VR1}-8$  & 0.607 (0.025) & 0.704 (0.037) & 0.728 (0.020) & 0.491 (0.013) & 0.525 (0.015) \\
$\textsc{VR1}-9$  & 0.656 (0.027) & 0.692 (0.030) & 0.767 (0.022) & 0.443 (0.009) & 0.619 (0.013) \\
$\textsc{VR1}-10$ & 0.666 (0.032) & 0.679 (0.026) & 0.732 (0.023) & 0.467 (0.012) & 0.546 (0.013) \\

$\textsc{VR2}-1$ & 0.732 (0.077) & 0.698 (0.114) & 0.837 (0.038) & 0.930 (0.013) & 0.936 (0.015) \\
$\textsc{VR2}-2$ & 0.720 (0.033) & 0.775 (0.074) & 0.846 (0.024) & 0.788 (0.009) & 0.291 (0.011) \\
$\textsc{VR2}-3$ & 0.681 (0.085) & 0.645 (0.074) & 0.762 (0.053) & 0.819 (0.013) & 0.412 (0.009) \\
$\textsc{VR2}-4$ & 0.798 (0.038) & 0.809 (0.029) & 0.863 (0.020) & 0.725 (0.011) & 0.385 (0.008) \\
$\textsc{VR2}-5$ & 0.701 (0.039) & 0.654 (0.065) & 0.745 (0.032) & 0.697 (0.007) & 0.417 (0.009) \\
$\textsc{VR2}-6$ & 0.773 (0.012) & 0.827 (0.018) & 0.848 (0.012) & 0.722 (0.012) & 0.398 (0.011) \\
$\textsc{VR2}-7$ & 0.698 (0.027) & 0.770 (0.023) & 0.818 (0.021) & 0.463 (0.004) & 0.336 (0.003) \\
$\textsc{VR2}-8$ & 0.678 (0.032) & 0.749 (0.032) & 0.771 (0.018) & 0.463 (0.003) & 0.283 (0.003) \\
$\textsc{VR2}-9$ & 0.616 (0.024) & 0.697 (0.015) & 0.779 (0.019) & 0.457 (0.006) & 0.393 (0.005) \\
$\textsc{VR2}-10$ & 0.654 (0.026) & 0.691 (0.020) & 0.764 (0.021) & 0.473 (0.004) & 0.332 (0.007) \\

$\textsc{VR3}-1$ & 0.673 (0.049) & 0.661 (0.050) & 0.767 (0.058) & 0.485 (0.004) & 0.844 (0.017) \\
$\textsc{VR3}-2$ & 0.599 (0.034) & 0.677 (0.048) & 0.753 (0.038) & 0.484 (0.021) & 0.609 (0.028) \\
$\textsc{VR3}-3$ & 0.548 (0.061) & 0.643 (0.054) & 0.727 (0.039) & 0.439 (0.016) & 0.576 (0.029) \\
$\textsc{VR3}-4$ & 0.488 (0.027) & 0.652 (0.038) & 0.662 (0.045) & 0.446 (0.017) & 0.670 (0.013) \\
$\textsc{VR3}-5$ & 0.553 (0.046) & 0.685 (0.040) & 0.783 (0.036) & 0.454 (0.024) & 0.627 (0.016) \\
$\textsc{VR3}-6$ & 0.514 (0.035) & 0.681 (0.053) & 0.692 (0.037) & 0.474 (0.032) & 0.657 (0.021) \\
$\textsc{VR3}-7$ & 0.568 (0.034) & 0.668 (0.035) & 0.743 (0.032) & 0.297 (0.008) & 0.651 (0.014) \\
$\textsc{VR3}-8$ & 0.608 (0.034) & 0.653 (0.027) & 0.645 (0.038) & 0.319 (0.011) & 0.705 (0.012) \\
$\textsc{VR3}-9$ & 0.505 (0.026) & 0.688 (0.027) & 0.660 (0.026) & 0.316 (0.010) & 0.652 (0.011) \\
$\textsc{VR3}-10$ & 0.550 (0.024) & 0.655 (0.036) & 0.696 (0.039) & 0.317 (0.011) & 0.727 (0.009) \\
\bottomrule
\end{tabular}
\caption{Active search instance. Algorithm value with respect to the LP benchmark, reported as ratios. Results are rounded to retain three decimals for consistency. Sample variances are shown in parentheses.}
\label{tab1}
\end{table}

Table \ref{tab1} shows that the LP-based \ouralg\ consistently achieves strong approximation ratios across all value regimes and increasing instance sizes, with low variance. Compared to LP-based methods using a consideration set, whose performance may be affected by the stochastic inclusion of jobs in the set, \safe constitutes a stronger heuristic since it executes a job whenever the server is available, guided directly by the LP solution. In contrast, the performance of \greedy\ and \batch\ varies drastically as the number of items changes, indicating a lack of robustness to instance scale. 
We next present results for the \textsc{S2} setting in Table~\ref{tab2}, which corresponds to a more patient expiration regime; here, we fix $\ell = 10$. 
This setting is subject to deadline constraints $D \in \{40, 200, 600\}$. Instances are indexed by their value regime and the deadline parameter. 

\begin{table}[H]
\centering
\begin{tabular}{c|c|c|c|c|c|c|}
\toprule
Instance & \textsc{SimAlg}\xspace & \textsc{ConSet}\xspace & \textsc{Safe}\xspace & \textsc{Greedy}\xspace & \textsc{\batch}\xspace \\
\midrule
$\textsc{VR1}-40$ & 0.780 (0.002) & 0.725 (0.011) & 0.923 (0.003) & 0.272 (0.000) & 0.836 (0.006) \\
$\textsc{VR1}-200$ & 0.825 (0.002) & 0.723 (0.002) & 0.865 (0.001) & 0.391 (0.000) & 0.817 (0.000) \\
$\textsc{VR1}-600$ & 0.800 (0.001) & 0.681 (0.001) & 0.902 (0.000) & 0.504 (0.000) & 0.830 (0.000) \\
$\textsc{VR2}-40$ & 0.786 (0.013) & 0.723 (0.007) & 0.876 (0.003) & 0.443 (0.000) & 0.788 (0.002) \\
$\textsc{VR2}-200$ & 0.788 (0.001) & 0.692 (0.002) & 0.855 (0.002) & 0.588 (0.000) & 0.867 (0.001) \\
$\textsc{VR2}-600$ & 0.777 (0.000) & 0.671 (0.002) & 0.895 (0.001) & 0.582 (0.000) & 0.876 (0.000) \\
$\textsc{VR3}-40$ & 0.782 (0.006) & 0.694 (0.003) & 0.893 (0.003) & 0.272 (0.000) & 0.787 (0.000) \\
$\textsc{VR3}-200$ & 0.754 (0.002) & 0.709 (0.001) & 0.867 (0.002) & 0.367 (0.000) & 0.890 (0.001) \\
$\textsc{VR3}-600$ & 0.778 (0.001) & 0.697 (0.001) & 0.862 (0.001) & 0.473 (0.001) & 0.862 (0.003) \\
\bottomrule
\end{tabular}
\caption{Active search instance with deadlines. Algorithm value with respect to the LP benchmark, reported as ratios. Results are rounded to retain three decimals for consistency. Sample variances are shown in parentheses.}
\label{tab2}
\end{table}

Table \ref{tab2} highlights the effect of deadline on algorithm performance, with smaller deadlines corresponding to more constrained decision windows. Across all value regimes, the LP-based \ouralg\ exhibits stable approximation ratios, whereas \greedy\ is highly sensitive to tight deadlines and performs poorly when the decision window is short. \conset exhibits high sensitivity to deadline extensions, resulting in a performance downgrade as the time horizon increases. We finally present results for the \textsc{S2} in Table~\ref{tab3} setting under both deadline and cardinality constraints. We fix the deadline at $D = 400$ and consider cardinalities $W \in \{9, 16, 25\}$. Instances are indexed by their value regime and the cardinality parameter.

\begin{table}[H]
\centering
\begin{tabular}{c|c|c|c|c|c|c|}
\toprule
Instance & \textsc{SimAlg}\xspace & \textsc{ConSet}\xspace & \textsc{Safe}\xspace & \textsc{Greedy}\xspace & \textsc{\batch}\xspace \\
\midrule
$\textsc{VR1}-9$ & 0.867 (0.012) & 0.900 (0.009) & 0.944 (0.006) & 1.000 (0.000) & 0.798 (0.000) \\
$\textsc{VR1}-16$ & 0.825 (0.002) & 0.723 (0.002) & 0.865 (0.001) & 0.391 (0.000) & 0.817 (0.000) \\
$\textsc{VR1}-25$ & 0.800 (0.001) & 0.681 (0.001) & 0.902 (0.000) & 0.504 (0.000) & 0.830 (0.000) \\
$\textsc{VR2}-9$ & 0.748 (0.007) & 0.722 (0.007) & 0.918 (0.002) & 0.997 (0.000) & 0.186 (0.000) \\
$\textsc{VR2}-16$ & 0.798 (0.003) & 0.665 (0.006) & 0.881 (0.003) & 0.990 (0.001) & 0.197 (0.000) \\
$\textsc{VR2}-25$ & 0.762 (0.002) & 0.622 (0.002) & 0.878 (0.001) & 0.858 (0.001) & 0.354 (0.000) \\
$\textsc{VR3}-9$ & 0.811 (0.005) & 0.689 (0.004) & 0.977 (0.002) & 1.000 (0.000) & 0.536 (0.000) \\
$\textsc{VR3}-16$ & 0.781 (0.003) & 0.719 (0.003) & 0.912 (0.004) & 1.000 (0.000) & 0.510 (0.000) \\
$\textsc{VR3}-25$ & 0.712 (0.003) & 0.677 (0.007) & 0.877 (0.004) & 0.685 (0.000) & 0.578 (0.000) \\
\bottomrule
\end{tabular}
\caption{Active search instance with deadlines and cardinalities. Algorithm value with respect to the LP benchmark, reported as ratios. Results are rounded to retain three decimals for consistency. Sample variances are shown in parentheses.}
\label{tab3}
\end{table}

Table \ref{tab3} reports performance under simultaneous deadline and cardinality constraints. The LP-based \ouralg\ maintains strong and stable approximation ratios as the cardinality constraint is relaxed, demonstrating robustness to capacity variation. \safe continues to perform consistently well across the different value regimes. In contrast, the performance of \greedy\ and \batch\ varies drastically with the cardinality constraint across all value regimes.

\subsection{Call Center Instance}\label{sec:call-center}

\paragraph{Instances.} We use a public domain dataset from the call center of `Anonymous Bank' in Israel.\footnote{Data can be accessed from \url{https://seelab.net.technion.ac.il/data/}.}
For our experimental setup, we use data originating from January $1999$, which comprises $30,140$ call entries, as the training dataset. The call records are primarily split into three distinct categories: ``priority'', ``regular'', and ``new'', each consisting $16,977$, $4,440$ and $8,723$ entries, respectively. We assign values $8$, $2$, and $1$ to priority, regular, and new calls.
Furthermore, using $20$ seconds as a reasonable unit time-step, we extract distributional information for both the (stochastic) evaluation and the expiration time. In particular, the expiration time corresponds to the duration a caller is willing to wait before abandoning the call. 
We present these as empirical point mass function distributions in Figures~\ref{pic:service} and~\ref{pic:departure} in Appendix~\ref{app:comp-sec-cc}. 
These are then used to sample instances (in particular, the service times and expiration times for the options) for our computational setup. 
As a sanity check, we compute the standard root mean squared error (RMSE) of the learned expiration and evaluation point mass functions for January and February, and observe that the error is 
$0.013$ and $0.006$, respectively. 

We consider problem sizes \( n \in \{10, 20, \ldots, 50\} \).\footnote{Larger problem sizes are not considered because, under the call center dynamics, nearly all customers would abandon before service, yielding no additional insight.}
For each size, we generate ten instances following the caller category frequency distribution, assigning each customer corresponding evaluation and expiration time distributions.

\paragraph{Results.} Table~\ref{tab4} presents the value ratios along with the sample variance, where each ratio represents the average over $100$ independent runs obtained by comparing heuristic outcomes against the LP benchmark. 
Instances are indexed by problem size $n$.

\begin{table}[h]
\centering
\begin{tabular}{c|c|c|c|c|c|c|}
\toprule
Instance & \textsc{SimAlg}\xspace & \textsc{ConSet}\xspace & \textsc{Safe}\xspace & \textsc{Greedy}\xspace & \textsc{\batch}\xspace \\
\midrule
$\mathtt{Call}-10$ & 0.580 (0.065) & 0.562 (0.031) & 0.610 (0.042) & 0.685 (0.048) & 0.604 (0.031) \\
$\mathtt{Call}-20$ & 0.760 (0.139) & 0.744 (0.067) & 0.786 (0.089) & 0.907 (0.105) & 0.769 (0.066) \\
$\mathtt{Call}-30$ & 0.761 (0.158) & 0.749 (0.073) & 0.791 (0.097) & 0.924 (0.120) & 0.772 (0.072) \\
$\mathtt{Call}-40$ & 0.766 (0.187) & 0.754 (0.079) & 0.799 (0.104) & 0.948 (0.142) & 0.784 (0.093) \\
$\mathtt{Call}-50$ & 0.761 (0.192) & 0.765 (0.077) & 0.808 (0.104) & 0.939 (0.141) & 0.797 (0.084) \\
\bottomrule
\end{tabular}
\caption{Call center instance. Algorithm value with respect to the LP benchmark, reported as ratios. Results are rounded to retain three decimals for consistency. Sample variances are shown in parentheses.}
\label{tab4}
\end{table}

Across all instance sizes, the LP-based \ouralg\ maintains stable and competitive performance relative to the LP benchmark, exhibiting only moderate variance as uncertainty increases with $n$. In line with previous findings, \conset yields results comparable to \ouralg, with the added advantage of bypassing the simulations required to estimate consideration set inclusion probabilities. Because different customer categories have similar service-time and expiration-time distributions, the \greedy\ algorithm performs well in this setting by prioritizing customers with the largest value.

\subsection{Discussions}\label{sec:discussion}
Overall, \ouralg\ achieves consistently high approximation ratios across all value regimes in both the active search instances and the call center instances. 
While heuristic methods such as \safe\ and \conset\ exhibit competitive—and sometimes superior—empirical performance in specific regimes, establishing formal theoretical guarantees for these approaches remains an open problem. 
We also observe that \greedy\ performs particularly well in the \textsc{VR2} regime, where values are positively correlated with service times, and that \batch\ benefits substantially from the presence of deadline constraints. Consequently, although these heuristics serve as effective substitutes in certain settings, \ouralg\ remains the more theoretically grounded choice when provable guarantees are prioritized.


\section{Conclusion}\label{sec:conclusion}
In this paper, we introduced and studied the sequential selection with expirations (\ssx) problem, a general framework that models decision-making under uncertainty, 
especially focused on uncertain evaluation times and probabilistic expirations.
Even when evaluation times are deterministic, we show that the problem is NP-hard, motivating the design of efficient algorithms with provable approximation guarantees. 
Our primary contribution is the design of a polynomial-time approximation algorithm for the general setting using a time-indexed LP relaxation of any online policy. 
We complement this with a tight analysis, demonstrating the limitations of our algorithm and the LP relaxation. 
For the special case with i.i.d. service times, we show that a simple greedy policy achieves a $1/2$-approximation, which is provably the best possible among such policies. 
Our framework naturally extends to constrained variants, including deadline and knapsack constraints, where similar approximation guarantees continue to hold.
To demonstrate practical relevance, we presented computational experiments in two distinct application domains: active search using real LLM performance data, and a classical call center scheduling dataset. 
Across both settings, our algorithms are easy to implement and perform robustly, offering decision-makers a principled way to make selection decisions under uncertainty of evaluation times and the possibility of options expiring.
We conclude by highlighting several promising directions for future research and practically relevant extensions, as summarized below.

\paragraph{Arrival Processes.}
A natural and practically relevant extension of our model involves incorporating \emph{option arrivals} over time, where options are not all available at the beginning but instead become available over time.
This setting captures many real-world scenarios in which opportunities arise dynamically, such as candidate availability in recruiting pipelines, new product features being ready for testing, or customers entering a service queue.
One way to formalize arrivals is through \emph{option-dependent release times}, 
where each option becomes available at a predetermined or stochastic time. 
When release times are \emph{deterministic}, our LP-based framework can be extended naturally by only allowing LP variables for time periods after an option’s release. In this case, both our LP relaxation and rounding approach go through with minimal modifications.
We leave open the question of handling more general arrival processes, for example, random order arrivals or stochastic arrivals where at each time $t$, an option of type $k$ arrives with probability $p_{t, k}$.

\paragraph{Parallel Evaluation and Other Combinatorial Constraints.}
Several natural extensions of our model involve supporting multiple decision-makers (DMs) or more general combinatorial constraints on how options are assigned. These extensions are both theoretically interesting and practically motivated.
A particularly natural direction is the \emph{multi-DM setting}, where multiple agents can evaluate options in parallel. Our LP framework can be extended to this setting. Specifically, by indexing DMs using an index $i$ and introducing decision variables $x_{j,t,i}$ to represent the probability that DM $i$ selects option 
$j$ at time $t$, we can obtain a valid relaxation that upper bounds the performance of the optimal policy. The constraints in this extended LP mirror those of our original formulation (see~\ref{lp:LP1}), applied to each DM.
However, while the LP upper bound carries over, it remains unclear how to adapt our algorithmic framework to this multi-agent setting.
More broadly, our framework can potentially be adapted to incorporate other combinatorial feasibility constraints; for example, matroid, matching, or scheduling constraints, by designing appropriate LP relaxations.
Designing approximation algorithms for such extensions remains a promising direction for future work.

\paragraph{Better Approximations.}
As we show, the approximation ratio achievable by our LP-based methodology is inherently limited
$1-1/e$ (see \Cref{sec:limitations}). 
Recent work by \citet{segev2024near} presents a quasi-polynomial time approximation scheme for the special case where all service times are uniform (and deterministic) and expirations follow geometric distributions.
Achieving significantly better guarantees would thus require either a stronger relaxation or an entirely different algorithmic approach. 
Exploring new algorithmic techniques offers an exciting avenue to break this barrier. Whether such improvements are possible, either in the general model or in natural special cases, remains an important open question, as also highlighted by \citet{segev2024near}.

\bibliographystyle{plainnat}
\bibliography{ref}


\appendix

\section{NP-Hardness and Comparison to the Online Optimum}\label{appendix:benchmark}
Recall that our main result is a polynomial-time constant-factor approximation algorithm for \ssx relative to the optimal online algorithm (see \Cref{thm:main-approx}). 
In this section, we discuss why we design an approximation algorithm rather than aiming for an exact optimal solution and also explain our choice of benchmark. Specifically, we justify our use of the optimal online algorithm as a benchmark (as opposed to the offline optimum) and highlight the computational challenges that make exact optimization intractable in our setting.

\medskip
\noindent \textbf{NP-Hardness.} 
In an instance of the NP-hard knapsack problem, we are given a set $[n]$ of items with non-negative costs $\{c_j : j \in [n]\}$ and values $\{v_j : j \in [n]\}$, and a budget $C$ on the total cost. The goal is to select a subset of items of total cost at most $C$ that maximizes the total value.
We note that \ssx generalizes the deterministic knapsack problem, proving that \ssx is NP-hard. 
Suppose that for all options $j \in [n]$, the evaluation time $S_j = c_j$ with probability $1$.
Furthermore, suppose that the probability that $j$ is available at time $t$ is $0$ when $t > C - c_j$. 
This is exactly a knapsack instance with deadline $C$.

\medskip
\noindent \textbf{Comparison to the Online Optimum.} As a consequence of the above discussion, we aim to design polynomial-time algorithms with provable guarantees (i.e., approximation algorithms) for the \ssx problem.
A natural approach would be to study \ssx under the competitive analysis
framework of \citet{borodin2005online}, where we compare ourselves to a
“clairvoyant” adversary who knows the random outcomes of the evaluation and expiration times in advance.
Unfortunately, the following example shows that an optimal algorithm  (even with unbounded computational power) obtains at most $O(1/{n})$ times the expected value obtained by an optimal clairvoyant (or, offline) algorithm.

\begin{example}
Consider $n$ identical options such that each option $j \in [n]$ has $v_j=1$, expires at time $n$ with probability $1$, and has evaluation time uniformly in $\{1, n\}$. Observe that, in expectation, $n/2$ options have a evaluation time of $1$, and a clairvoyant algorithm obtains expected total value of $n/2$. On the other hand, any algorithm that does not know the realizations of the evaluation times obtains expected total value of at most $2$ (in expectation, the second option that is selected has evaluation time equal to $n$).
\end{example}

This motivates us to utilize a more refined benchmark, and thus, we compare our algorithm against an optimal online (non-clairvoyant) algorithm.

\section{Missing Proofs From Section~\ref{sec:intro}}
Recall that we assume $\overline{F}_{S_j}(T) = 0$ for some $T \in \mathbb{N}$, and that $p_j(t) > 0$ for all $t \geq 1$ for each $j \in [n]$. 
In this section, we show that both of these assumptions can be made at the expense of an arbitrarily small loss in the objective value.

\subsection{Reduction to Bounded Support Evaluation Times}\label{app:bounded_support}

We begin by showing that we can, without loss of generality, assume that there exists $T$ such that $\overline{F}_{S_j}(T) = 0$ for each $j \in [n]$. 
Formally, we prove the following.

\begin{proposition}
    For any $\epsilon > 0$ and any instance of \ssx, there is a policy $\Pi$ such that $\Pi$ only selects options between times $1$ to $T$ and obtains value at least $(1 - \epsilon)$ times the value of the online optimum.
\end{proposition}

\begin{proof}
    Let $\OPT$ and $v(\OPT)$ denote the optimal policy and its value, respectively. 
    Furthermore, let $\tau \geq 1$ such that $\Pr(S_j > \tau) \leq \epsilon/n^2$ for all $j \in [n]$ and set $T = n\tau$. 
    Consider event $A = \{\exists j\in [n]: S_j>T \}$, and observe that $\Pr(A) \leq \epsilon/n$. 
    Let $B$ denote the event that  $\OPT$ selects an option at time after $T$. 
    Then, $\Pr(B) \leq \Pr(A) \leq \epsilon/n$. 
    Let $\Pi$ be the policy that follows $\OPT$ up to time $T$ (and after that it does not select any option). Let $v(\Pi)$ denote its expected total value. We have
    \begin{align*}
        v(\Pi) \,\, \geq \,\, v(\OPT) - v(\OPT\mid B)\cdot \Pr(B) \,\, \geq \,\, v(\OPT) - \epsilon \cdot v(\OPT) \,\, = \,\, (1 - \epsilon) \cdot v(\OPT),
    \end{align*}
    where, in the second inequality, we used that $v(\OPT \mid B) \leq n \cdot v_{\max}$ and $v_{\max} \leq v(\OPT)$.
\end{proof}



\subsection{Reduction to Expiration Times with Unbounded Support}\label{app:unbonuded-support}

In this section, we show that we can assume that $p_j(t) > 0$ for all $t \geq 1$ for all $j \in [n]$
at the expense of a small additive loss in the objective value.
Recall that $\Pr(E_j > t) = p_j(t)$, and let $G_j$ be a geometric random variable with probability of success $1-\epsilon$. 
That is, $\Pr(G_j = k) =\epsilon^{k-1}\cdot (1-\epsilon)$ for all $k \geq 1$. 
Consider modified expiration times $E_j'= \max\{E_j, G_j \}$. Then,
\[
\Pr({E}_j' < t) = \Pr(E_j < t) \cdot (1-\epsilon^{t-1}). 
\]
Thus, $p_j'(t):=  \Pr(E_j' > t) \geq \epsilon^{t-1}$ for all $t \geq 1$ and $j \in [n]$. 
Let $\OPT$ and $v(\OPT)$ denote the optimal policy and its value, respectively.
Let $\OPT'$ and $v'(\OPT')$ denote the optimal policy and its value with the modified expiration times ${E}_1',\ldots,{E}_n'$. The main result of this section is as follows.

\begin{proposition}
    For any $\epsilon > 0$, we have $v(\OPT) \leq v'({\OPT}') \leq (1 + n^2 \epsilon) \cdot v(\OPT)$.
\end{proposition}

\begin{proof}
    The first inequality is immediate as options remain longer in the system with the new expiration times ${E}_1',\ldots,{E}_n'$. 
    We prove the the second inequality as follows. 
    Let $B$ be the event that there exists some $j \in [n]$ such that $E_j \neq E_j'$. 
    Then, $\Pr(B) \leq \Pr(\exists j \in [n]: G_j \geq 2) \leq n \epsilon$. 
    Let $\Pi'$ be a policy with respect to expiration times $E_1',\ldots,E_n'$, and consider 
    the policy $\Pi$ that selects according to $\Pi'$ but
    with respect to the expiration times $E_1,\ldots,E_n$.
    Note that, conditioned on the complement event $\overline{B}$, the two policies $\Pi$ and $\Pi'$ obtain the same expected total value. 
    We denote by $v(\Pi)$ and $v'(\Pi')$ the expected total value obtained by $\Pi$ with respect to the expiration times $E_1,\ldots,E_n$ and  $E_1',\ldots,E_n'$, respectively. Then,
    \begin{align*}
        v(\Pi) &= \,\, \E_{E_1, \ldots, E_n}\left[ \sum_{j=1}^n v_j \cdot \mathbb{I}_{\{ \Pi \text{ selects }j \}}  \right] \,\, = \,\, \E_{E_1', \ldots, E_n'}\left[ \sum_{j=1}^n v_j \cdot \mathbb{I}_{\{ \Pi' \text{ selects }j \}} \mid \overline{B} \right] \cdot \Pr(\overline{B}) \\
        & = \,\, v'(\Pi') - \E_{E_1',\ldots, E_n'}\left[ \sum_{j=1}^n v_j \cdot \mathbb{I}_{\{ \Pi' \text{ selects }j \}} \mid {B} \right] \cdot \Pr({B}) \,\, \geq \,\, v'(\Pi') - n^2 \epsilon \cdot v(\OPT),
    \end{align*}
    where the final inequality used that $v_j\leq v(\OPT)$ for all $j\in [n]$. 
\end{proof}

\section{Estimating Probabilities in~\Cref{alg:random-serve}}\label{app:sampling}
Recall that our algorithm (\Cref{alg:random-serve}) requires computing the exact values of  $f_{j,t}$ for each option $j \in [n]$ and time $t \geq 1$.
We are not aware of a closed-form expression that can be used to compute these values; however, we will show that these values can be estimated well enough, which suffices for the algorithm. The main result of this section is as follows.

\begin{theorem}\label{thm:main-sampling}
    Suppose that \Cref{alg:random-serve} is an $\alpha$-approximation for the sequential selection with expiration problem when run with values exact $\{f_{j, t}\}_{j \in [n], t \geq 1}$ values.  
    Then, for any $\epsilon > 0$, there is an algorithm that, when run with approximate values $\left\{\overline{f}_{j, t}\right\}_{j \in [n], t \geq 1]}$ satisfying
    \(
    \left|f_{j, t} - \overline{f}_{j, t}\right| \,\, \leq \,\, \epsilon \cdot f_{j,t} \text{ for all } j\in [n], t \geq 1,
    \)
    obtains expected value at least $(1 - O(\epsilon))\cdot \alpha$ times the value of the optimal online policy.
\end{theorem}
\begin{proof}
    The modified algorithm is identical to \Cref{alg:random-serve} but with a slight modification in how options are added to the consideration set.
    At any time step $t$, if the DM is free, the algorithm adds option $j$ to the consideration set $\C^*$ with probability \({(1-\epsilon)^2\cdot x^*_{j, t}}/{2\cdot p_j(t)\cdot \overline{f}_{j, t}}\).

    The analysis is identical to the proof of \Cref{thm:appx-bound}, and begins by proving that the modified algorithm is feasible. 
    In particular, we need to show that ${(1-\epsilon)^2\cdot x^*_{j,t}}/{2\cdot p_j(t)\cdot \overline{f}_{j, t}} \leq 1$ for all $j \in [n]$ and $t \geq 1$. 
    Let $\run_{j,t}$ denote the event that the algorithm  selects option $j$ at time $t$ and let $\mathtt{C}_{j,t}$ denote the event that option $j$ is in $\C^*$ at time $t$.
    We begin by bounding  $\Pr(\mathtt{C}_{j, t})$. We have:
    \begin{align}
    \textstyle \Pr(\mathtt{C}_{j, t})  
    & \textstyle = \Pr(\mathtt{C}_{j, t} \mid \eA_{j, t}, \SF_t, \NC_{j, t}) \cdot \Pr(\SF_t, \NC_{j, t} \mid \eA_{j, t}) \cdot \Pr(\eA_{j, t})
    =  \frac{(1-\epsilon)^2\cdot x^*_{j, t}}{2 \cdot p_j(t) \cdot \overline{f}_{j, t}} \cdot f_{j, t} \cdot p_j(t) \leq \frac{(1-\epsilon)\cdot x^*_{j, t}}{2}\notag,
    \end{align}
    where the penultimate inequality follows from the algorithm design and the definition of $f_{j, t}$, and the final inequality $\overline{f}_{j, t} \geq (1-\epsilon)\cdot f_{j, t}$ for all $j \in [n]$ and $t \geq 1$. This allows us to conclude that for any $j \in [n]$ and $t \geq 1$, and $\tau < t$:
    \begin{enumerate}
        \item $\Pr(\run_{i,\tau}\mid \eA_{j,t}) \leq (1-\epsilon)\cdot x_{i,\tau}^*/2$ for all options $i \neq j$, and

        \item $\Pr(\mathtt{C}_{j,\tau}\mid \eA_{j,t})\leq \frac{(1-\epsilon)\cdot x_{j,\tau}^*}{2\cdot p_j(\tau)}$ for all options $j \in [n]$.
    \end{enumerate}
    We use these properties to complete the proof.
    To do so, we again consider the quantity $1 - f_{j, t}$, which can be upper bounded by the probability of events where, either (i) the DM is busy at time $t$ because of another option, or (ii) option $j$ was considered at an earlier time $\tau < t$. 
    By applying the union bound, we get
    \begin{align*}
    \textstyle 1-f_{j,t} \,\, & \textstyle \leq \,\, \sum_{\tau<t} \sum_{i\neq j} \Pr(\run_{i,\tau} \mid \eA_{j,t}) \cdot \Pr(S_i>t-\tau) + \sum_{\tau<t} \Pr(\mathtt{C}_{j,\tau} \mid \eA_{j,t})\\
    & \textstyle \leq \,\, \sum_{\tau<t} \sum_{i \neq j} \frac{(1-\epsilon)\cdot x^*_{i,\tau}}{2} \cdot \overline{F}_{S_i}(t-\tau) + \sum_{\tau < t} \frac{(1-\epsilon)\cdot x^*_{j, \tau}}{2\cdot p_j(\tau)} \,\, \leq \,\, \frac{1-\epsilon}{2}\left(1 + \left(1 -\frac{x^*_{j,t}}{p_j(t)}\right)\right),
    \end{align*}
    where the final inequality follows from the feasibility of $\x^*$. 
    Rearranging the inequality and using $(1-\epsilon)\cdot f_{j, t} \leq \overline{f}_{j, t}$ completes the argument.

    Finally, to obtain a bound on the expected value of the algorithm, it suffices to observe that 
    \[
    \textstyle \Pr(\mathtt{C}_{j,t}\mid \SF_t) \geq \frac{(1-\epsilon)^2 \cdot x^*_{j,t}}{2 \cdot (1+\epsilon) \cdot \Pr(\SF_t)}.
    \]
    Then, following the analysis in \Cref{lem:key} implies that the expected total value is at least $(1 - 3\epsilon)$ times the value obtained by \Cref{alg:random-serve}.
\end{proof}

We finish up this section by showing that we can estimate $\overline{f}_{j, t}$ values in polynomial-time.
To this end, we will use $N = O\left( \frac{v_j \cdot n^2 \cdot T\cdot \log(2n^3T)}{\epsilon^3}\right)$ independent samples to obtain an estimate of $f_{j, t}$ for option $j \in [n]$ at time step $t$. Note that the overall time taken for estimating the $f_{j, t}$ values is $O(nT \cdot N)$. 

Fix time step $t \geq 1$, and let $\overline{f}_{j, t}$ denote the average of the corresponding event obtained using $N$ samples. 
A crucial requirement for this simulation-based estimation scheme is a lower bound on the $f_{j, t}$ value. 
In particular, if $\widehat{f}_{j, t}$ is an independent sample, we have \(\E[\widehat{f}_{j, t}] = f_{j,t} \geq \frac{x^*_{j,t}}{2 \cdot p_j(t)}\). Thus, it suffices to assume that $x^*_{j, t} \geq \frac{\epsilon}{v_j \cdot n^2 \cdot T}$ for all $j \in [n]$ and $t \geq 1$. 
This assumption can be made without loss of generality, as it results in at most an $\epsilon$-additive loss in the total objective value.
On the plus side, we obtain the following uniform lower bound: $f_{j, t} \geq \frac{\epsilon}{2v_j \cdot n^2 \cdot T}$.
As $\overline{f}_{j, t}$ is an average of $N$ independent samples, each with mean $f_{j, t}$, using a Chernoff bound, we get, with probability at least $1 - \frac{1}{n^3T}$:
\[
\pr(|\overline{f}_{j, t} - f_{j, t}| \geq \epsilon \cdot f_{j, t}) \leq 2\exp\left(-\epsilon^2 \cdot \frac{N\cdot f_{j,t}}{3}\right) \leq 2\exp\left(-\epsilon^2 \cdot \frac{N\cdot \epsilon}{12}\right) \leq \frac{1}{n^3T}.
\]

We say that the sampling was successful if \( |\overline{f}_{j, t} - f_{j, t}| \leq \epsilon \cdot f_{j, t} \)  for all options $j \in [n]$ and for all time steps $t \geq 1$. 
By the union bound, sampling is successful with probability at least $1 - \frac{1}{n}$.
If sampling is not successful, then we obtain no value.
Hence, combined with \Cref{thm:main-sampling} and \Cref{thm:appx-bound}, the expected total value obtained is at least $(1-\frac{1}{n}) \cdot (1 - O(\epsilon)) \cdot \frac{1}{2}\cdot(1-\frac{1}{e}) \approx 0.316 - O(\epsilon)$.

\ignore{
\subsection{Simulation Process}\label{app:sim_process}
To retrieve the simulated value of $f_{j, t}$, denoted as $\widehat{f}_{j,t}$, we introduce following notations. 
\begin{itemize}
    \item $\mathtt{M}$: Number of simulations iterations made in $\ALG_\varepsilon$. And for ease of analysis, we set $\mathtt{M}$ to be $2n^2Tv_{max} \mu/\varepsilon$, where $T = n^k, v_{max} = \max_{j\in[n]} v_j$, $\delta = \varepsilon/(n^2T)$, and $\mu = 3 \ln{(2\delta^{-1})}/\varepsilon^2$. 
    \item $\mathtt{Count}_{j,t}$: Out of $\mathtt{M}$ iterations, the number of occurrences of the event $\ALG_\varepsilon$ has not considered option $j$ before time $t$ and the DM is free at time $t$ conditioned on the event that option $j$ is available at time $t$, or concisely, the event of $\{ \NC_{j,t},\SF_t \mid \eA_{j,t} \}$. 
    \item $\mathtt{C}^{'}_{j,t}$: The event that option $j$ is considered at time $t$, when $\ALG_\varepsilon$ is run with input $\left\{ \widehat{f}_{j, t'} \right\}_{j\in [n], t' \in [nT]}$. 
\end{itemize}
As discussed in Lemma~\ref{lem:feasibility}, we know that $f_{j,1} =1$ for all $j\in [n]$, thus we perform the following simulation process starting from time $t = 2$: 
\begin{itemize}
    \item  We start simulation by iterating $\ALG_\varepsilon$ with $t = 2$ along with other standard inputs (distributional information for evaluation and expiration time, as well as option values) for a total of $\mathtt{M}$ times. And for each iteration, when option $j$ satisfies the event $\{ \NC_{j,2},\SF_2 \mid \eA_{j,2} \}$, we increase $\mathtt{Count_{j,2}}$ by $1$. In the end, we retrieve the value of $\widehat{f}_{j, 2} = \mathtt{Count}_{j,2} / \mathtt{M}$ for all $j$. 
    \item We continue the same process with time $t$ increasing by $1$ each time up until time $B$, and use $\left\{ \widehat{f}_{j, t'} \right\}_{j\in [n], t' \in [t-1]}$ as known information to simulate $\left\{ \widehat{f}_{j, t} \right\}_{j\in [n]}$. 
\end{itemize}

\ignore{
\begin{algorithm}
\caption{$\ALG_\varepsilon$}
\label{alg:simulation}
\begin{algorithmic}[1]
\State \textbf{Input:} options $j=1, \ldots, n$, with value $v_j$, expiration time $p_j(\cdot)$, parameter $c$, time $t$, simulated $\left\{ \widehat{f}_{j, t'} \right\}_{j\in [n], t' \in [t]}$ 
\State $\mathbf{x}^* \gets $ optimal solution LP~\eqref{lp:LP1}.
\State $\C \gets [n]$.
\For{$t'=1,\ldots t$}
    \If{DM is free}
        \State $\C^* \gets \emptyset$.
        \For{$j \in \C$}: 
            \If{$x^*_{j,t'} \geq \varepsilon/n^2Tv_j$}:
            add $j$ to $\C^*$ w.p. ${x^*_{j,t'}} \cdot (1-\epsilon)^2/{(2 \cdot p_j(t) \cdot \widehat{f}_{j,t'})}$.
            \EndIf
        \EndFor
        \State $j_{t'} \gets \arg \max_{j \in \C^*} \{v_j\}$, \textbf{select} $j_{t'}$.
    \EndIf
    \State \textbf{update} $\C$ by removing the option that was considered (if any) or options that are no longer available.
\EndFor
\end{algorithmic}
\end{algorithm}
}

\subsubsection{Impact on Objective Value}\label{subsec:impact}
\begin{claim}\label{claim: d0}
Given $\mathbf{x}^*$ the optimal solution of LP~\eqref{lp:LP1}, we can disregard tiny values of $x^*_{j,t}$ for all $j$ and $t$ that are no larger than $\varepsilon/n^2Tv_j$ as illustrated in $\ALG_\varepsilon$, at the expense of an $\varepsilon$-loss in the overall objective. 
\end{claim}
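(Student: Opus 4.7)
The strategy is straightforward: construct the truncated solution explicitly, verify it is still feasible, then control the objective loss by a simple counting argument.

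First, I would define $\tilde{\mathbf{x}}$ by
\[
\tilde{x}_{j,t} = \begin{cases} x^*_{j,t} & \text{if } x^*_{j,t} > \varepsilon/(n^2 T v_j), \\ 0 & \text{otherwise.} \end{cases}
\]
Feasibility is essentially free: every constraint in~\eqref{lp:LP1} has the form ``nonnegative linear combination of $x_{j,t}$'s $\leq$ constant,'' so coordinate-wise decreases of a feasible vector remain feasible. Thus $\tilde{\mathbf{x}}$ is feasible for~\eqref{lp:LP1}.

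Next, I would bound the objective gap $v_{LP} - \sum_{j,t} v_j \tilde{x}_{j,t}$. Each zeroed-out pair $(j,t)$ contributes at most $v_j \cdot \varepsilon/(n^2 T v_j) = \varepsilon/(n^2 T)$ to the loss, and there are at most $nT$ such pairs, giving a total loss of at most $\varepsilon/n$. To convert this absolute bound into the multiplicative $\varepsilon$-loss claimed, I would normalize so that $v_{\max} = \max_j v_j = 1$ (WLOG by rescaling), and observe that $v_{LP} \geq v_{\max} = 1$ — this follows because at $t=1$ the server is free almost surely and $p_{j}(1) = 1$, so the LP solution that sets $x_{j^*,1} = 1$ for $j^* = \arg\max_j v_j$ (and zero elsewhere) is feasible and attains objective $v_{\max}$. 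Therefore $\sum_{j,t} v_j \tilde{x}_{j,t} \geq v_{LP} - \varepsilon/n \geq (1 - \varepsilon) v_{LP}$.

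I do not expect a serious obstacle here; the work is essentially bookkeeping. The only subtle point is that the truncation must be paired with the correct normalization to turn the absolute $\varepsilon/n$ slack into the stated multiplicative $\varepsilon$-loss, and that the entire subsequent analysis of $\ALG_\varepsilon$ (including Monte Carlo estimation of $\widehat{f}_{j,t}$) is carried out against $\tilde{\mathbf{x}}$ rather than $\mathbf{x}^*$, so this $\varepsilon$ compounds cleanly with the later loss terms in the final approximation ratio.
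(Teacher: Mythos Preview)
Your proposal is correct and follows essentially the same counting argument as the paper: bound each discarded term $v_j x^*_{j,t}$ by the threshold $\varepsilon/(n^2T)$ and sum over the at most $n\cdot T$ (or $n\cdot nT$, depending on the time horizon used) pairs. The only cosmetic difference is the normalization: the paper simply rescales the values so that $v_{LP}=1$ and concludes the absolute loss is at most $\varepsilon$, whereas you normalize $v_{\max}=1$ and invoke $v_{LP}\geq v_{\max}$ to convert the absolute bound into a multiplicative one; both routes are equivalent and your feasibility check for $\tilde{\mathbf{x}}$ is a nice extra detail the paper leaves implicit.
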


\begin{proof}
    Without loss of generality, we can normalize the objective value to $1$ from scaling respective $v_i$ values. Then, the loss of the LP value can be derived: 
    $$\text{\emph{loss of }}v_{\LP} = \sum_{t\geq 1}\sum_{j=1}^n \mathbb{I}\{x^*_{j,t} \leq \frac{\varepsilon}{v_j\cdot n^2T} \}x^*_{j, t} v_j \leq \varepsilon$$
    Thus, only an $\epsilon$ fraction of the objective will be lost. 
\end{proof}
The advantage of doing so lies in the fact that very small $f_{j,t}$ value would not fit the Monte Carlo simulation scheme in general. Consequently, we avoid simulating these numerical values. Indeed, 
$$\E[\widehat{f}_{j, t}] = f_{j,t} \geq \frac{x^*_{j,t}}{2 \cdot p_j(t)} \geq \frac{\epsilon}{2n^2T \cdot p_j(t) \cdot v_{j}}$$

A lower bound for $f_{j,t}$ that we simulate is established, and thus we assume that $\mathtt{Count}_{j,t} \geq \mu$ in the following analysis.


\subsubsection{Failure Analysis}\label{subsec:fail_anal}
 
Notice the following scenario may happen during simulation, that is:
    $\widehat{f}_{j, t}$ can be more than $\epsilon$ away from $f_{j,t}$. Formally, by Chernoff bound, we retrieve the following inequality: 
    $$\Pr(|\widehat{f}_{j, t}-f_{j,t}| \geq \varepsilon f_{j,t}) = \Pr(|\mathtt{Count}_{j,t} - \mathtt{M} \cdot f_{j,t}| \geq \varepsilon \mathtt{M} \cdot f_{j,t}) \leq \delta$$
where the last inequality applies Chernoff bound~\cite{mitzenmacher2017probability}. We call the above event as \emph{failure} which could happen with $\varepsilon$ in the whole time frame $t \in [nT]$ and across all option $j\in [n]$ by union bound. However, assuming no failures happen, we inductively prove the following claim. 
}

\ignore{
\begin{claim}\label{claim: d1}
    Given zero failure up to time $t$, denoted as $\ZF_t$, ${x^*_{j,t}} \cdot (1-\epsilon)^2/{(2 \cdot p_j(t) \cdot \widehat{f}_{j,t})} \leq 1$ for all $t \in [nT]$ and $j \in [n]$.
\end{claim}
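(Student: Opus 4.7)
The plan is to mirror the inductive proof of Lemma~\ref{lem:feasibility}, carefully tracking how the two factors of $(1-\varepsilon)$ propagate. I would induct on $t$. The base case $t=1$ is immediate: by construction $\widehat{f}_{j,1}=1=f_{j,1}$, and LP constraint~\eqref{eq:LP1(a)} gives $x^*_{j,1}/p_j(1)\leq 1$, so ${x^*_{j,1}(1-\varepsilon)^2}/{(2 p_j(1)\widehat{f}_{j,1})}\leq (1-\varepsilon)^2/2 \leq 1$. For the inductive step, define $f^{\varepsilon}_{j,t}:=\Pr(\NC_{j,t},\SF_t\mid \eA_{j,t})$, the \emph{true} probability of the event under $\ALG_{\varepsilon}$ (viewing the precomputed $\widehat{f}$'s as deterministic inputs, since they come from a separate simulation phase). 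The no-failure hypothesis $\ZF_t$ guarantees $(1-\varepsilon)f^{\varepsilon}_{j,\tau}\leq \widehat{f}_{j,\tau}\leq (1+\varepsilon)f^{\varepsilon}_{j,\tau}$ for every $\tau\leq t$ and every job.

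Next, I would replay the chain of identities from the proof of Lemma~\ref{lem:feasibility} with the modified acceptance probability. Specifically, for any $\tau<t$,
\[
\Pr(\run_{i,\tau}) \leq \Pr(\mathtt{C}'_{i,\tau}) = \frac{x^*_{i,\tau}(1-\varepsilon)^2}{2\,p_i(\tau)\widehat{f}_{i,\tau}}\cdot f^{\varepsilon}_{i,\tau}\cdot p_i(\tau) \leq \frac{x^*_{i,\tau}(1-\varepsilon)^2}{2\,(1-\varepsilon)f^{\varepsilon}_{i,\tau}}\cdot f^{\varepsilon}_{i,\tau} = \frac{x^*_{i,\tau}(1-\varepsilon)}{2},
\]
where the penultimate step used the no-failure lower bound on $\widehat{f}_{i,\tau}$. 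The analogue of Claim~\ref{claim:well-defined} then goes through verbatim (the same symmetry-of-conditioning argument shows $\Pr(\run_{i,\tau}\mid \eA_{j,t})\leq \Pr(\run_{i,\tau})$, and the telescoping identity gives $\Pr(\mathtt{C}'_{j,\tau}\mid \eA_{j,t})\leq (1-\varepsilon)x^*_{j,\tau}/(2p_j(\tau))$).

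Applying the union bound exactly as in Lemma~\ref{lem:feasibility} and then invoking LP constraints~\eqref{eq:LP1(a)}--\eqref{eq:LP1(b)} yields
\[
1-f^{\varepsilon}_{j,t} \leq \frac{1-\varepsilon}{2}\Bigl(1-\sum_{i=1}^n x^*_{i,t}\Bigr) + \frac{1-\varepsilon}{2} - \frac{(1-\varepsilon)x^*_{j,t}}{2\,p_j(t)},
\]
which rearranges to $f^{\varepsilon}_{j,t}\geq (1-\varepsilon)x^*_{j,t}/(2\,p_j(t))$. Combining with the lower tail of the no-failure bound,
\[
\widehat{f}_{j,t} \geq (1-\varepsilon)f^{\varepsilon}_{j,t} \geq \frac{(1-\varepsilon)^2 x^*_{j,t}}{2\,p_j(t)},
\]
which is exactly the claim.

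The main obstacle is the subtle bookkeeping of when $(1-\varepsilon)$ factors are introduced: one factor comes from the no-failure slack between $\widehat{f}$ and $f^{\varepsilon}$ inside the acceptance probability, and a second comes from applying no-failure to the newly computed $\widehat{f}_{j,t}$ itself. Getting both factors to line up so that the numerator $(1-\varepsilon)^2$ in $\ALG_{\varepsilon}$'s acceptance rule cancels exactly is what makes the choice of this constant canonical. A secondary technical point, which I would address with one sentence, is that since the simulation phase producing $\{\widehat{f}_{j,\tau}\}$ is independent of the live execution, the conditioning-on-availability arguments of Claim~\ref{claim:well-defined} are unaffected by treating $\widehat{f}$'s as random.
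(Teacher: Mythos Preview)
Your proposal is correct and follows essentially the same inductive route as the paper: compute $\Pr(\mathtt{C}'_{j,\tau})$ in terms of $f^{\varepsilon}_{j,\tau}$ and $\widehat f_{j,\tau}$, use the no-failure bound to get $\Pr(\mathtt{C}'_{j,\tau})\leq(1-\varepsilon)x^*_{j,\tau}/2$, replay the union-bound argument of Lemma~\ref{lem:feasibility} with the LP constraints, and then apply no-failure once more to pass from $f^{\varepsilon}_{j,t}$ to $\widehat f_{j,t}$. The only cosmetic differences are that the paper overloads the symbol $f_{j,t}$ rather than introducing your $f^{\varepsilon}_{j,t}$, and the paper states the slightly sharper intermediate bound $f_{j,t}\geq x^*_{j,t}/(2p_j(t))$ (using the extra $\varepsilon$ of slack from $1-(1-\varepsilon)$) where you record $f^{\varepsilon}_{j,t}\geq(1-\varepsilon)x^*_{j,t}/(2p_j(t))$; either bound suffices for the claim.
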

\begin{proof}
    Note by the set-up of $\ALG_\varepsilon$, we can find the probability of $\mathtt{C}^{'}_{j,t}$: 
    $$\Pr[\mathtt{C}^{'}_{j,t}] = \frac{x^*_{j, t}}{2 \widehat{f}_{j, t}} \cdot f_{j, t} \cdot  (1-\varepsilon)^2$$ 
    Assuming no failures happen (i.e. $1-\varepsilon \leq \frac{\widehat{f}_{j, t}}{f_{j,t}} \leq 1+\varepsilon$) up to time $t$, we get: 
    \begin{align}
        \frac{(1-\varepsilon)^2}{1+\varepsilon} \cdot \frac{x^*_{j,t}}{2} \leq \Pr[\mathtt{C}^{'}_{j,t} | \ZF_t] \leq (1-\varepsilon)\cdot \frac{x^*_{j,t}}{2} \label{$X_{j,t}$}
    \end{align}

    The process of derivation is similar to Lemma~\ref{lem:feasibility}, and by inductive hypothesis that $\Pr[\mathtt{C}^{'}_{j,t'}|\ZF_{t'}] \leq (1-\varepsilon)\cdot \frac{x^*_{j,t'}}{2}$ for all $j \in [n]$ and $t'<t$, we have, under zero failure,  
\begin{align}
    1-f_{j,t} & \leq \sum_{\tau<t} \sum_{i \neq j} \left( (1-\varepsilon)\cdot \frac{x^*_{i,\tau}}{2}\right) \cdot \Pr(S_i > t - \tau) + \sum_{\tau < t} (1-\varepsilon) \cdot \frac{x^*_{j,\tau}}{2 \cdot p_j(\tau)} \tag{Claim~\ref{claim:well-defined}}\\
    & \leq (1-\varepsilon) \cdot \frac{1-\sum_{i=1}^n x^*_{i,t}}{2} + \frac{1-\varepsilon}{2} \cdot \left(1 -\frac{x^*_{j,t}}{p_j(t)}\right) \notag
\end{align}
Reordering the above inequality will give $f_{j,t} \geq x^*_{j,t} / (2 \cdot p_j(t))$. Thus, 
$$\widehat{f}_{j,t} \geq (1-\epsilon) f_{j,t} \geq (1-\varepsilon) \cdot x^*_{j,t} / (2 \cdot p_j(t))$$ which would guarantee the desired feasibility given no failure happens.  
\end{proof}
}


\ignore{
\begin{proof}[Proof of Theorem~\ref{lem:deadline}]
Let $I$ be an instance of \ssx with option deadlines, we show the optimal value of~\eqref{lp:LP2}, $v_{LP}(I)$, is at least the value of the optimal policy, denoted as $v(\OPT)$. Similar with the proof of Theorem~\ref{thm:main-upperbound-opt}, we reuse notations that $\Pi$ represents any feasible policy, $v(\Pi, I)$ denotes expected total value obtained by a feasible policy $\Pi$, and $x_{j, t}$ stands for the probability (across all sample paths) that $\Pi$ selects option $j$ at time $t$. Thus, $v(\Pi, I) = \sum_{t= 1}^{D_j} \sum_{j=1}^n \Pr(S_j \leq D_j -t) x_{j, t} v_j$, where an additional term of $\Pr(S_j \leq D_j -t)$ is added since we are not allowed to collect any value after the option deadline. 

We also make necessary modifications to the constraints accordingly. Since each option $j$ has a deadline of $D_j$, the summation found in Constraint~\eqref{eq:LP2(a)} now has a temporal upper bound of $D_j$. As for Constraint~\eqref{eq:LP2(b)}, note that we are only allowed to select option $j$ up to time $D_j$. Therefore, at any given time $t$, only options with deadlines later than $t$ are permitted to occupy the DM busy. 
\end{proof}
}

\ignore{
\begin{example}\label{ex:greedy_bad_knapsack1}
Consider the greedy algorithm that selects the option with the largest value that won't exceed the size of the knapsack. 
Consider the following instance of $2$ options and knapsack size $W$. We set the parameters of option $1$ as follows: $v_1 = w_1 = 1$, $E_1 = 1$ and $S_1 = 1$ with probability $1$. For option $2$, we have 
$v_2 = W-1$, $w_2 = W$, $E_2 = 1$ and $S_j = 1$ with probability $1$. 
Observe that any algorithm can only select at most one option and the greedy algorithm selects option $1$, collecting value $1$. The optimal solution obtains a total value of $W-1$ by first selecting options $2$. 
\end{example}

\begin{example}\label{ex:greedy_bad_knapsack2}
Consider the greedy algorithm that selects the one with the largest value per unit-weight ($v_j/w_j$) that won't exceed the size of the knapsack. 
Consider the following instance of $n$ options and knapsack size $W > n$. We set the parameters of option $1$ as follows: $v_1 = 1 + \varepsilon, w_1 = W$, $E_1 = n$ and $S_1 = 1$ with probability $1$. For each option $j \in \{2, \ldots, n\}$, we have $v_j = 1$, $w_j = 1$, $E_j = n$ and $S_j = 1$ with probability $1$. 
Observe that the greedy algorithm can only select option $1$ and collect value $1+\varepsilon$. The optimal solution obtains a total value of $n-1$ by serving all options but option $1$. 
\end{example}

\begin{example}\label{ex:greedy_bad_knapsack3}
    Consider the hybrid greedy algorithm that, from the set of available options that won't exceed the size of the knapsack, with probability $1/2$ selects the option $j$ with the largest $v_j$ and then stop; with probability $1/2$ selects options by decreasing order of value per unit-weight ($v_j/w_j$). Note above policy gives a $1/2$-approximation for the standard knapsack problem. Consider the following instance of $n+1$ options and knapsack size $n$ (assume for simplicity, $n$ is even). For each option $j \in \{1, \ldots, n\}$, we have $v_j = 1 + (j-\frac{n+1}{2})\cdot \varepsilon$, $w_j = 1$, $E_j = j+1$ and $S_j = 1$ with probability $1$. We set the parameters of option $n+1$ as follows: $v_{n+1} = 1 + n\varepsilon /2, w_{n+1} = n$, $E_{n+1} = n$ and $S_{n+1} = 1$ with probability $1$. Note that the hybrid greedy policy with probability $1/2$, retrieve a value of $v_{n+1} = 1 + n\varepsilon /2$ and stop. With remaining probability $1/2$, selects option $j$ from $\{n, \ldots, n/2 +1\}$ sequentially and stop since all other options have already expired, and retrieve total value of: $n/2 + n^2\varepsilon / 8$. The optimal policy selects options from $\{1, \ldots, n\}$ sequentially and collect a total value of $n$. Thus, when $n$ large, and $\varepsilon \to 0$, $v(\ALG_{\text{Hybrid Greedy}})/v(\OPT) \to 1/4$. 
\end{example}
}

\section{Greedy Policies for I.I.D. Evaluation Times}\label{app:greedy_analysis}

In this section, we formalize the high-level arguments discussed in \Cref{greedy_section} and give a formal proof of \Cref{thm:greedy}. Before providing the details, we need to introduce new definitions since the intermediate policy makes decisions based on the history of the computation. That is, the decision that $\ALG'$ makes at time $t$ depends on all the decisions and outcomes observed from times $1,\ldots,t-1$. 

\subsection{Preliminaries}

In this subsection, we recast the \ssx problem by first defining states that keep track of the history of actions and options remaining in the system. Then, we define policies that make decisions based on these states. Finally, we introduce the value (or value) collected by the policy.

The problem input are $n$ options with triple $(v_j,E_j,S_j)$ for $j\in [n]$. A \emph{state} $\bm{\sigma}$ for time $t$ is an ordered sequence of length $t-1$  of the form $\sigma_1 \sigma_2 \cdots \sigma_{t-1}$, where:
\begin{enumerate}
	\item Each $\sigma_\tau \in \{  (R_\tau, \mathtt{null}, \mathtt{null})  : R_\tau \subseteq [n] \} \cup \{  (R_\tau, s, \tau \to j)  : j\in [n], R_\tau \subseteq [n], s \geq 1 \}$, and\label{cond:valid_state_1}
	
	\item If $\sigma_\tau = ( R_\tau, s, \tau \to j )$,  then $\sigma_{\tau+1},\ldots, \sigma_{\min\{ \tau+s-1, t-1 \}}$ must be of the form $(R,\mathtt{null},\mathtt{null})$, and \label{cond:valid_state_2}
	
	\item $R_{t-1}\subseteq R_{t-2}\subseteq \cdots \subseteq R_1 \subseteq [n]$. We define $R_0=[n]$. \label{cond:valid_state_3}
\end{enumerate} 
The triplet $(R_\tau, s, \tau\to j)$ represents that at time $\tau$, the  decision-maker selects (or simulate) option $j$, the evaluation time of $j$ is $s$, and the options that remain have not expired the system are $R_\tau$. The triplet $(R_\tau,\mathtt{null}, \mathtt{null})$ indicates that at time $\tau$ a previous option is still evaluating or the decision-maker decided not to select any option. States have a natural recursive structure: $\bm{\sigma}=\bm{\sigma}'(R_{t-1}, s_{t-1}, t-1\to {j_{t-1}})$ or $\bm{\sigma}=\bm{\sigma}'(R_{t-1},\mathtt{null}, \mathtt{null})$, depending on the last triplet of $\bm{\sigma}$. The \emph{initial state} $\bm{\sigma}_0$ is the empty sequence. Given a state $\bm{\sigma}=\sigma_1\cdots\sigma_{t-1}$, we say that the state $\bm{\sigma}'=\sigma_1\cdots \sigma_\tau$ is a \emph{prefix} of $\bm{\sigma}$ for any $\tau\leq t-1$. Let  $\bm{\Sigma}_t\subseteq \bm{\sigma}$ be the set of all states for time $t$. Then, $\bm{\Sigma}_{1}=\{\bm{\sigma_0}\}$ and $\bm{\Sigma}=\bigcup_{t\geq 1} \bm{\Sigma}_t$ is the set of all possible states for any time $t\geq 1$.

A policy is a function $\Pi: \bm{\Sigma} \to [n]\cup\{\mathtt{null} \}$. Given a state $\bm{\sigma}=\sigma_1\cdots\sigma_{t-1}$ for time $t-1$, let $\tau_{\bm{\sigma}}=\max\{  \tau\leq t-1 : \sigma_{\tau}=(R_\tau, s, \tau \to j) \text{ for some }s,j \}$ the index of the last triplet $\sigma_{\tau}$ in $\bm{\sigma}$ that is of the form $(R_{\tau},s,\tau \to j)$; we define $\tau_{\bm{\sigma}}=-\infty$ if $\bm{\sigma}$ has no such triplets. Then, we define $s_{\bm{\sigma}}$ to be the evaluation time in the triplet $\sigma_{\tau_\sigma}$ if $\tau_\sigma\geq 1$; otherwise, $s_{\bm{\sigma}}=-\infty$. The policy $\Pi$ is \emph{feasible} if for any state $\bm{\sigma}$ for time $t$, we have that the sequences of $t$ triplets defined by $$\bm{\sigma}'=\begin{cases}
	\bm{\sigma}(R_{t} , s,  t\to j ), & \text{if } \Pi(\bm{\sigma}) =j \in [n], \text{ and } \tau_{\bm{\sigma}}+s_{\bm{\sigma}} \leq t \\
	\bm{\sigma}(R_t, \mathtt{null}, \mathtt{null}) & \text{if }\Pi(\bm{\sigma})=\mathtt{null} 
\end{cases}$$
is a state for time $t+1$ (i.e., it satisfies~\ref{cond:valid_state_1}-\ref{cond:valid_state_3}) for any subset $R_t\subseteq R_{t-1}$ and $s\geq 1$. In other words, a valid policy must respect the evaluation time of the last option select before $t$, if any, while also must respect the expirations, if any. A state $\bm{\sigma}$ for time $t$ is \emph{generated by a valid policy $\Pi$} if $\bm{\sigma}=\bm{\sigma}_0$ or $\bm{\sigma}$ can be obtained from its prefix $\bm{\sigma}'$ for time $t-1$ by applying the policy $\Pi$.

Given a state $\bm{\sigma} = \sigma_1 \ldots \sigma_{t-1}$ at time $t$, we say that \emph{option $j$ has been selected before time $t$ (in $\bm{\sigma}$)} if there exists a triplet in $\bm{\sigma}$ of the form $\sigma_\tau = (R_\tau, s_{i,\tau}, \tau \rightarrow j)$ with $j \in R_{\tau-1}$; the largest $\tau' \geq 1$ for which the prefix $\bm{\sigma}'$ up to time $\tau'$ in $\bm{\sigma}$, where option $j$ has not been selected before $\tau'$, represents the \emph{time when $j$ is selected in $\bm{\sigma}$}. Suppose that option $j$ is selected in $\bm{\sigma}$ at time $\tau'$; then, we say that any time $\tau > \tau'$ for which $\sigma_{\tau} = (R_{\tau}, s_{i,\tau}, \tau \rightarrow j)$ holds, represents a time when option $j$ is \emph{simulated}, regardless of whether $j$ belongs to $R_{\tau-1}$. For a valid policy $\Pi$ and a state $\bm{\sigma}$ for time $t$, we say that \emph{$\Pi$ selects option $j$ at time $t$} if $\Pi(\bm{\sigma})=j$, $j\in R_{t-1}$, and $j$ has not been selected before $t$ in $\bm{\sigma}$. If $\Pi(\bm{\sigma})=j$ and $j\notin R_{t-1}$ or $j$ has been selected before $t$ in $\bm{\sigma}$, then, we say that \emph{$\Pi$ simulates option $j$ at time $t$}.

Given a valid policy $\Pi$ and a state $\bm{\sigma}$ for time $t$, we define the expected value of the $\Pi$ starting from the state $\bm{\sigma}$ as 
\[
v( \bm{\sigma} \mid \Pi ) = \begin{cases}
	 v_j \mathbb{I}_{\{  j\in R_{t-1}, j \text{ has not been selected in }\bm{\sigma} \}} + \E_{S_j, R_t}\left[   v(\bm{\sigma} (R_t, S_j, t\to j))   \right] & \text{if } \Pi(\bm{\sigma}) = j\in [n] \\
  
	 \E_{R_t} \left[  v(\bm{\sigma}(R_t, \mathtt{null}, \mathtt{null} ))  \right] &  \text{if }\Pi(\bm{\sigma}) =\mathtt{null}
\end{cases}
\]
Note that a policy can only collect a value from an option $j$ once and for the first $k$ options that selects. After that, if the policy decides to select $j$ again, it can only simulate selecting the option. Given the initial state $\bm{\sigma}_0$, we are interested in solving
$$\sup_{\substack{\Pi \text{ feasible}  \\\text{policy}} } v(\bm{\sigma}_0\mid \Pi).$$
This value is well-defined, as the policy that maps everything to $\mathtt{null}$ is feasible (with an expected value of $0$). Additionally, the value of any policy is upper-bounded by $\sum_{j\in [n]} v_j$. We denote by $\OPT$ the feasible policy that maximizes $v(\bm{\sigma} \mid \Pi)$. Note that an optimal policy will never simulate an option if it can select an option. However, allowing this possibility is going to be useful for the next subsection when we analyze the greedy algorithm. When it is clear from the context, and to agree with the notation introduced in previous subsections, we will write $v(\Pi)$ to refer to $v(\bm{\sigma} \mid \Pi)$.
 
\begin{remark}
    Notice that we have only defined \emph{deterministic policies}. Alternatively, we can define randomized policies as $\Pi(\bm{\sigma}) \in \Delta([n]\cup{\mathtt{null}})$, where $\Delta(X) = \{ \mathbf{x} \in \mathbb{R}^X: \sum_{j\in X} x_j = 1, \mathbf{x}\geq 0 \}$ represents the set of distributions over a finite set $X$. A standard result from Markov decision process theory ensures that any randomized policy has a deterministic counterpart with the same value. For more details, refer to~\cite{puterman2014markov}.
\end{remark}

\subsection{Analysis of Greedy}


\paragraph{The intermediate policy $\ALG'$} We construct $\ALG'$ iteratively from $\OPT$ as follows. Let $\ALG^{0}= \OPT$ be the initial policy. Let  $\bm{\Sigma}_t\subseteq \bm{\sigma}$ be the set of all states for time $t$. Then, $\bm{\Sigma}_{1}=\{\bm{\sigma_0}\}$ and $\bm{\Sigma}=\bigcup_{t\geq 1} \bm{\Sigma}_t$. Since each $\bm{\Sigma}_t$ is countable, we can assume there is an enumeration $\bm{\Sigma}_{t}=\{  \bm{\sigma}^1, \bm{\sigma}^2, \ldots \}$. 

For each $t=1,2,\ldots$, let $\ALG^t = \ALG^{t-1}$. Now, we iteratively modify $\ALG^t$ for each $\bm{\sigma}\in \bm{\Sigma}_t$. For each $t'=1,2,\ldots$, take $\bm{\sigma}^{t'}\in \bm{\Sigma}_t$ and let $R_{t-1}\subseteq [n]$ be the set associated with the last triplet appearing in $\bm{\sigma}^{t'}$ (for $t=1$ we set $R_0=[n]$).
	
	If $R_{t-1}=\emptyset$ or $\ALG^{t}(\bm{\sigma}^{t'})=\mathtt{null}$, then we go to $t'+1$. Otherwise, let $i^*$ be the index of the highest-valued option $v_{i^*}=\max_{i\in R_{t-1}} v_i$ and $\ALG^t(\bm{\sigma}^{t'})=j$. 
	\begin{itemize}
		\item If $j=i^*$, then, go to $t+1$. In this case $\ALG^t$ is already behaving like greedy.
		
		\item If $j\neq i^*$, then, set $\ALG^t(\bm{\sigma}^{t'})=i^*$. In this case, $\ALG^t$ will follow the greedy action. Now, for every $\bm{\sigma}'\in \bigcup_{\tau> t}\bm{\Sigma}_{\tau}$ that has $\bm{\sigma}$ as a prefix, say $\bm{\sigma}' = \bm{\sigma} \sigma_{t} \sigma_{t+1} \cdots \sigma_{\tau-1}$, we redefine $\ALG^t(\bm{\sigma}'')= \ALG^{t-1}(\bm{\sigma}') $, where $\bm{\sigma}'' = \bm{\sigma}(R_t,s, t\to i^*) \sigma_{t+1}\cdots \sigma_{\tau-1}$, where $\sigma_t=(R_t,s,t\to j)$. In other words, after $t$, $\ALG^t$ will follow $\ALG^{t-1}$ as if $\ALG^{t-1}$ had chosen $i^*$ at time $t$ in state $\bm{\sigma}$. 
	\end{itemize}

Note that in this construction, we potentially need to examine an infinite number of states in $\Sigma_{t}$. We can overcome this challenge as follows: Let $T\geq 1$ be the smallest value for which $\Pr(S \geq T+1) \leq \varepsilon/n$, where $\varepsilon>0$. Then, define $\OPT'$ as the policy that follows $\OPT$ up to states for time $T$, and for states at time $\tau > T$, $\OPT'$ only maps the state to $\mathtt{null}$. It can be easily demonstrated that $\OPT'$ guarantees an expected value of at least $(1-\varepsilon)v(\OPT)$. Now, by employing the same procedure with $\OPT'$, we can focus on states for time $t\leq T$ that include triplets $(R,s,\tau\to j)$ with $s\leq T$. Since we can make $\varepsilon$ arbitrarily small, the above construction holds.

\begin{proposition}\label{prop:from_opt_to_intermediate}
	We have $v(\ALG')\geq v(\OPT)/2$.
\end{proposition}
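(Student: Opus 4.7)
The plan is to establish $v(\ALG')\geq v(\OPT)/2$ by a sample-path coupling argument combined with a two-to-one charging scheme that assigns each unit of $\OPT$-value to a unit of $\ALG'$-value. The i.i.d. service-time assumption is essential: it allows me to couple the execution of $\OPT$ with the execution of $\ALG'$ by sharing service-time realizations whenever the construction swaps $\OPT$'s action $j$ for $\ALG'$'s argmax action $i^*$ at a state $\bm{\sigma}$, thereby preserving the sequence of times at which the server becomes free.

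I would first use the iterative construction to check, by induction on the enumeration of $(t,\bm{\sigma})$ pairs, that at every reachable state $\bm{\sigma}$ at time $t$ in $\ALG'$'s trajectory the action is $i^*_t=\arg\max_{i\in R_{t-1}}v_i$: every non-greedy action in $\ALG^{t-1}$ is explicitly overwritten by the greedy one, and the parallel-state re-mapping propagates this property along $\ALG'$'s continuation. Under the coupling, the exogenous departure process is shared between $\OPT$ and $\ALG'$, so the sequence of sets $R_{t-1}$ and the sequence of decision times $t_1<t_2<\cdots$ at which the server is free coincide along the two trajectories. Fix a sample path and let $j_k$ denote the job $\OPT$ serves at $t_k$; by the argmax property, $v_{j_k}\leq v_{i^*_{t_k}}$ for every $k$.

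The charging assigns each $\OPT$-collected value $v_{j_k}$ to an $\ALG'$-run in one of two ways: rule (i) charges $v_{j_k}$ to the $\ALG'$-run at the same time $t_k$, and rule (ii) charges $v_{j_l}=v_{i^*_{t_k}}$ back to the $\ALG'$-run at time $t_k$ whenever $\OPT$ later serves the job $j_l=i^*_{t_k}$ at some time $t_l>t_k$ (triggering an $\ALG'$-simulation at $t_l$ under the parallel-state coupling). Because each job is served at most once by $\OPT$, rule (ii) fires at most once per $\ALG'$-run, and both charges to a given $\ALG'$-run are bounded above by $v_{i^*_{t_k}}$. Hence each $\ALG'$-run absorbs at most $2v_{i^*_{t_k}}$ in charges, yielding $\sum_k v_{j_k}\leq 2\sum_k v_{i^*_{t_k}}$ on every sample path; averaging over the coupled probability space gives $v(\OPT)\leq 2v(\ALG')$.

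The main obstacle will be verifying that every $\OPT$-run is actually accounted for by rules (i)--(ii) under the iterative construction. The delicate case is when $\ALG'$ executes a simulation rather than a genuine run at some $t_l$: I must confirm that the simulated job is exactly the $i^*_{t_k}$ from an earlier decision time $t_k$ at which $\ALG'$ did a genuine swap, so that the $\OPT$-value collected at $t_l$ is properly redirected by rule (ii) back to $\ALG'$'s run at $t_k$. This will require a careful induction on the sequential modifications $\ALG^0\to\ALG^1\to\cdots\to\ALG'$, using the i.i.d. coupling to match the service-time realization across the parallel branch. A minor technical point is that the construction ranges over countably many states in $\bigcup_t\bm{\Sigma}_t$; this is handled by the preliminary truncation at time $T$ with $\Pr(S\geq T+1)\leq \varepsilon/n$ introduced before the proposition, which incurs only a vanishing $O(\varepsilon)$ loss that may be sent to zero.
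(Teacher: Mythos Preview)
Your plan is the paper's: share the departure process and, using the i.i.d.\ service-time assumption, couple so that $\OPT$ and $\ALG'$ free the server at identical instants; then pay for each $\OPT$-run with at most two copies of an $\ALG'$-run. The paper executes this as a row-by-row telescoping comparison $\ALG^{t-1}\to\ALG^t$; you sketch a direct two-rule charging and then defer to a ``careful induction on the sequential modifications,'' which is exactly that telescoping.

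There is, however, a genuine gap at precisely the ``delicate case'' you flag but do not close. When $\ALG'$ simulates at time $t_l$ (because $i^*_{t_l}=\arg\max_{i\in R_{t_l-1}}v_i$ coincides with an $i^*_{t_k}$ already run at some earlier $t_k$), the job $j_l$ that $\OPT$ serves at $t_l$ need \emph{not} equal $i^*_{t_k}$; then rule~(ii) does not fire and rule~(i) attaches $v_{j_l}$ to a zero-value step. Concretely, take jobs $a,b,c,d$ with values $10,5,5,5$, unit i.i.d.\ service times, and deterministic departures $D_b=1$, $D_c=2$, $D_d=3$, $D_a=\infty$. Then $\OPT$ runs $b,c,d,a$ for total $25$, while $\ALG'$ as constructed (argmax over $R_{t-1}$, regardless of already-run status) runs $a$ at $t=1$ and then only simulates $a$ at $t=2,3,4$, collecting $10<25/2$. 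The paper's incremental argument hits the same wall on this instance at the step $\ALG^1\to\ALG^2$: $\ALG^2$ gets $0$ at $t=2$ while $\ALG^1$ gets $5$, contradicting the case ``if $\ALG^t$ gets $0$ value at time $t$, then $\ALG^{t-1}$ also does.'' The natural repair is to take $i^*_t$ as the argmax over jobs in $R_{t-1}$ \emph{not yet run by the current policy}; with that choice every non-null action of $\ALG'$ is a genuine run, the simulation case disappears, and your factor-two bookkeeping (rule~(i) plus at most one rule~(ii) back-charge per run) goes through cleanly.
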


\begin{proof}
	For the sake of simplicity, we run the argument over $\OPT'$ that does not make any decision after time $T$ so the construction above only generates a finite sequence of algorithms $\ALG^0, \ldots, \ALG^T$, and $\ALG'=\ALG^T$. We know that up to an arbitrarily small error, the result will hold. We denote $\OPT'$ simply by $\OPT$. 
	
	In the proof, we compare the value collected by $\ALG^t$ against $\ALG^{t-1}$ for each $t\geq 1$. To do this, we take a state $\bm{\sigma}$ for time $T+1$ that is generated by $\OPT$. From this state, we generate a state $\bm{\sigma}^T$ that is generated by $\ALG^T$. This process shows that the probability that $\OPT$ generates $\bm{\sigma}$ is equal to the probability that $\ALG^T$ generates $\bm{\sigma}^T$. Furthermore, if $v_{\bm{\sigma}}$ denotes the value collected by options selected in $\bm{\sigma}$ and $v_{\bm{\sigma}^T}$ the value collected by selections in $\bm{\sigma}^T$, then, we show that $v_{\bm{\sigma}}\leq 2 v_{\bm{\sigma}^T}$. With these two ingredients, we conclude that twice the expected value collected by $\ALG^T=\ALG'$ upper bounds the expected value of $\OPT$. The rest of the proof is a formalization of this idea.
	
	Let $\bm{\sigma}=\sigma_1 \sigma_2\cdots \sigma_T$ be a state for time $T+1$ generated by $\OPT$.  Let $i_1, \ldots, i_k$ be the options selected by $\OPT$ in this state and $\tau_1< \ldots<\tau_k$ be the times they are selected. We first generate a sequence of states for time $T+1$ that $\ALG^t$ can generate for each $t=1,\ldots,T$. Let $\bm{\sigma}^0=\bm{\sigma}$. For $t\geq 1$, we generate $\bm{\sigma}^t$ from $\bm{\sigma}^{t-1}$ as follows. Let $\sigma_\tau^{t-1}$ be the $\tau$-th triplet in $\bm{\sigma}^{t-1}$. Then,
	\begin{itemize}
		\item If $\sigma_t^{t-1}=(R_t,\mathtt{null},\mathtt{null})$, then, $\bm{\sigma}^t=\bm{\sigma}^{t-1}$ and go to $t+1$.
		
		\item Otherwise, if $\sigma_t^{t-1}=(R_t,s,t\to j)$, then, let $\ALG^{t}(\sigma_1^{t-1}\cdots \sigma_{t-1}^t)=j'$ and define $\bm{\sigma}^t = \sigma_1^{t-1}\cdots \sigma_{t-1}^{t-1} (R_t,s,t\to j') \sigma_{t+1}^{t-1}\cdots \sigma_{T}^{t-1}$. That is, $\bm{\sigma}^t$ is the same as $\bm{\sigma}^{t-1}$ with the $t$-th triplet changed to $(R_t,s,t\to j')$.
	\end{itemize}
	With this, we have generated a sequence $\bm{\sigma}^t$ that $\ALG^t$ can generate. An inductive argument shows that the probability that the sequence $\bm{\sigma}$ is generated by $\OPT$ is the same as the probability that $\ALG^t$ generated $\bm{\sigma}^t$. Moreover, notice that $\ALG^t$ and $\ALG^{t-1}$ make decisions different from $\mathtt{null}$ at the same times $\tau_1,\ldots, \tau_k$.
	
	Now, we are ready to compare the values of $\ALG^{t-1}$ and $\ALG^t$. We do this by recording the values as in the table in Figure~\ref{fig:iterative_construction}. The columns corresponds to the times where $\OPT,\ALG^1,\ldots,\ALG^T$ make decisions, in this case, $\tau_1,\ldots,\tau_k$. The rows are the values that each algorithm $\ALG^t$ collects with a minor twist. For each time $\tau\leq t_t$, we boost the value collected by $\ALG^t$ by a factor of $2$ as Figure~\ref{fig:iterative_construction} shows.
	\begin{figure}[ht]
		\centering
		\begin{tabular}{|c|c|c|c|c|c|c|}
			\hline
		time & $\tau_1$ & $\tau_2$ & $\tau_3$ & $\tau_4$ & $\cdots$  & $\tau_k$ \\
		\hline\hline
		$\OPT$ &  $v_{i_1}$ & $v_{i_2}$ & $v_{i_3}$ & $v_{i_4}$ & $\cdots$ & $v_{i_k}$  \\ \hline
		$\ALG^1$ & $2v_{i^*_1}$  & $v_{i_2}$ & $0$ & $v_{i_4}$ & $\cdots$ & $v_{i_k}$\\ \hline
		$\ALG^2$ & $2v_{i^*_{1}}$ & $v_{i_2}$ & $0$ & $v_{i_4}$ & $\cdots$ & $v_{i_k}$\\ \hline
		$\ALG^3$ & $2v_{i^*_{1}}$ & $2v_{i^*_2}$ & $2v_{i_3^*}$ & $v_{i_4}$ & $\cdots$ & $v_{i_k}$\\ \hline
		$\vdots$  & $\vdots$ & $\vdots$ & $\vdots$ & $\vdots$ & & $\vdots$ \\ \hline
		$\ALG^T$ & $2v_{i^*_{1}}$ & $2v_{i^*_2}$ & $2 v_{i_3^*}$ & $2v_{i^*_4}$ & $\cdots$ & $0$\\ \hline
	\end{tabular}
	\caption{Comparison of coupled values. In this example, $1=t_1< 2 < t_2$ and so $\ALG^2$ does not change the action in $t_2$.}\label{fig:iterative_construction}
	\end{figure}
	We claim that the sum of values in the $t$-th row in Figure~\ref{fig:iterative_construction} upper bounds the sum of values in the $(t-1)$-th row for all $t\geq 1$. Indeed, for $t\geq 1$, if $t\neq \tau_1,\ldots,\tau_k$, then the sum of values in row $t$ and $t-1$ are the same. if $t=\tau_j$, for some $i$, then, $v_{i_j^*}$ upper bounds the value obtained in the same column for row $t-1$. Furthermore, the additional copy induced by the factor of $2$ in $v_{i_j^*}$ helps pay for any future occurrence of item $i_j^*$ in row $t-1$. 
    
	In summary, we obtain
	$$\sum_{j \text{ selected in  }\bm{\sigma}}v_j \Pr(\OPT \text{ generates }\bm{\sigma}) \leq 2 \sum_{j \text{ selected in  }\bm{\sigma}^T}v_j \Pr(\ALG^T\text{ generates }\bm{\sigma}^T)$$
	and summing over all $\bm{\sigma}$ (which has a unique $\bm{\sigma}^T$ associated) concludes that $v(\OPT)\leq 2 v(\ALG')$.
\end{proof}

\begin{proposition}\label{prop:from_inter_to_greedy}
	We have $v(\ALG_{\text{Greedy}}) \geq v(\ALG')$.
\end{proposition}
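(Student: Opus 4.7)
The plan is to reduce to the setting of Proposition~\ref{prop:from_opt_to_intermediate} by iteratively eliminating idle decisions rather than non-greedy run decisions. By the construction of $\ALG'$, the only way it can differ from $\ALG_{\text{Greedy}}$ is to return $\mathtt{null}$ at some state $\bm{\sigma}$ for time $t$ with $R_{t-1}\neq\emptyset$, since whenever $\ALG'$ runs a job it already picks the highest-valued available one. Truncating the horizon at $T$ with $\Pr(S\geq T+1)\leq \varepsilon/n$ makes the relevant state space finite at the cost of an arbitrarily small additive error, exactly as in the proof of Proposition~\ref{prop:from_opt_to_intermediate}.

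Set $\ALG^0 = \ALG'$ and enumerate the states $\bm{\sigma}_1,\bm{\sigma}_2,\ldots$ at which $\ALG^0$ idles despite $R_{t-1}\neq\emptyset$. To pass from $\ALG^{k}$ to $\ALG^{k+1}$, modify the decision at $\bm{\sigma}_{k+1}$: if $\bm{\sigma}_{k+1}$ corresponds to time $t$ and $i^\ast = \arg\max_{i\in R_{t-1}} v_i$, set $\ALG^{k+1}(\bm{\sigma}_{k+1}) = i^\ast$; on every state $\bm{\sigma}'$ extending $\bm{\sigma}_{k+1}(R_t,s,t\to i^\ast)$, let $\ALG^{k+1}(\bm{\sigma}')$ copy the action $\ALG^{k}$ would have taken on the corresponding state extending $\bm{\sigma}_{k+1}(R_t,\mathtt{null},\mathtt{null})$, simulating $i^\ast$ (and collecting zero reward) whenever $\ALG^{k}$ would later have chosen $i^\ast$.

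To show $v(\ALG^{k+1})\geq v(\ALG^{k})$, the plan is to run a sample-path tabular argument analogous to the one organized around Figure~\ref{fig:iterative_construction}. The i.i.d. assumption lets us couple the service time $S$ that $\ALG^{k+1}$ draws for $i^\ast$ at time $t$ with the service time of the first job $\ALG^{k}$ runs at some $t'>t$, realigning the two trajectories' server-free moments up to a finite offset. Column by column over the run-times $\tau_1<\tau_2<\cdots$ of $\ALG^{k}$ after $\bm{\sigma}_{k+1}$, the value collected by $\ALG^{k+1}$ is at least that of $\ALG^{k}$: the $v_{i^\ast}$ collected at time $t$ by $\ALG^{k+1}$ pays for any later simulation of $i^\ast$ forced by mimicking $\ALG^{k}$, while every other column's pick by $\ALG^{k+1}$ matches (or dominates) that of $\ALG^{k}$ by the shared ``highest-valued available first'' rule. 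Iterating the modification finitely many times yields $\ALG_{\text{Greedy}}$ and the chain $v(\ALG^0)\leq v(\ALG^1)\leq\cdots\leq v(\ALG_{\text{Greedy}})$.

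The main obstacle is validating the coupling cleanly. During $\ALG^{k}$'s idle window $[t,t'-1]$, departures continue in real time, so the available sets of the two trajectories could drift apart and the $\arg\max$ at a realigned column could in principle differ between the two policies. The argument should still go through because $\ALG^{k+1}$ has consumed exactly one additional job ($i^\ast$) compared to $\ALG^{k}$ at the realignment point, and both policies share the ``highest-valued first'' invariant; formalizing this row-by-row in the spirit of Figure~\ref{fig:iterative_construction} is where the bulk of the remaining technical work will live.
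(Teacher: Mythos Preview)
The paper's proof is a two-sentence observation: by construction, whenever $\ALG'$ selects a job (run or simulate) it chooses $\arg\max_{i\in R_{t-1}}v_i$, so any time $\ALG'$ actually \emph{runs} a job it is the highest-valued available one; Greedy does the same but never skips. Under the i.i.d.\ coupling already in force (the $k$-th non-null selection of each policy uses the same service time), Greedy's $k$-th selection occurs no later than $\ALG'$'s $k$-th, hence sees a superset of remaining jobs and collects a weakly larger value at every step. No iterative construction is needed.

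Your plan has two concrete problems. First, your premise that ``the only way $\ALG'$ can differ from Greedy is to return $\mathtt{null}$'' is incomplete: $\ALG'$ always outputs $\arg\max_{i\in R_{t-1}}v_i$, which may already have been run earlier in the state, yielding a \emph{simulation}. Simulations are also missed opportunities and must be eliminated; you never address them. Second, and more seriously, the swap you propose breaks the structural property that made the Proposition~\ref{prop:from_opt_to_intermediate} coupling work. There, replacing one run by another keeps the triplet $(R_t,s,t\to\cdot)$ and hence the server-free schedule intact. Replacing a $\mathtt{null}$ (server free at $t{+}1$) by a run (server free at $t{+}S$) does not: your ``corresponding state'' map sends a time-$(t{+}1)$ state of $\ALG^k$ to a time-$(t{+}1)$ state of $\ALG^{k+1}$ where the server is \emph{busy} whenever $S>1$, so $\ALG^{k+1}$ cannot copy $\ALG^k$'s action there. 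Your patch---coupling $S$ to the service time of $\ALG^k$'s first post-idle run at $t'>t$---introduces a persistent offset $t'-t$; thereafter $\ALG^{k+1}$ mimics $\ALG^k$'s choices at strictly earlier real times, where the available set is larger, so the mimicked choice is generally \emph{not} $\arg\max$ over what $\ALG^{k+1}$ actually sees. Consequently $\ALG^{k+1}$ is no longer greedy on the states it reaches, and the iteration does not terminate at $\ALG_{\text{Greedy}}$. The obstacle you flag in your last paragraph is exactly this, and it is not a detail to be filled in---it undermines the whole scheme. The direct ``lazy greedy $\le$ greedy'' comparison the paper uses avoids all of this.
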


\begin{proof}
    Observe that whenever $\ALG'$ selects an option, it does so with the highest-valued one available. Greedy does the same, but it does not miss any opportunity to select an option as $\ALG'$ might do. Hence, the expected value collected by greedy can only be larger than the expected value collected by $\ALG'$. This finishes the proof.
\end{proof}

\section{Missing Details of Computational Results}\label{app:comp-sec}

We begin by giving a detailed description of the \batch algorithm used in our computational setup.

\medskip
\noindent \textbf{ENS Algorithm.}
Inspired by active search applications, we adapt and test a variant of the efficient nonmyopic active search (\batch) method proposed by \citet{jiang2017efficient} 
to align with our budgeted sequential search setting. 
The \batch algorithm is a greedy selection strategy that, whenever the DM is free,  
selects the option that maximizes the expected reward attainable under the remaining budget $b$ and the current set of available jobs $R$. 
Let the initial budget be $B$. We define auxiliary variables
\[r_k^{\text{safe}}(b') := v_k \cdot \Pr(S_k \le b') \cdot \Pr(E_k \geq B-b'\mid E_k \geq B-b), \ w_k^{\text{safe}}(b') =  \Pr(S_k \leq b') \cdot \mathbb{E}[S_k \mid S_k \le b'].\]
Here $r_k^{\text{safe}}(b')$ roughly measures the expected value we would obtain from selecting option $k$ and $w_k^{\text{safe}}(b')$ is roughly evaluating the expected processing time of option $k$ under the ``good'' event that its service time is less than remaining budget $b'$.
The estimated value of selecting option $j$ with budget $b$ and available set $R$ is then given by:
\[V_j(b, R)  = \sum_{\tau} \Pr(S_j = \tau) \cdot \mathbb{I}\{\tau \le b\} \Big[ v_j + K \big(b-\tau, R\setminus\{j\}\big) \Big],\] 
where the exploration term $K$ is computed approximately by solving the following knapsack problem:
\[K(b', R') \, \approx \, \max_{x_k \in \{0,1\}}  \;\sum_{k \in R'} x_k \cdot r_k^{\text{safe}}(b') \quad \text{s.t.} \quad \sum_{k \in R'} x_k \cdot w_k^{\text{safe}}(b') \le b'.\]
In this formulation, $v_j$ represents the immediate reward from selecting option $j$, while 
$K$ heuristically estimates the residual value obtainable from the remaining budget and options.
The final policy chooses the option $j \in [n]$ that maximizes $V_j(B, [n])$.


\subsection{Missing Plots from \Cref{sec:call-center}}\label{app:comp-sec-cc}
\begin{figure}[!h]
  \centering
  \begin{subfigure}[b]{0.45\textwidth}
  \centering
  \includegraphics[width=\textwidth]{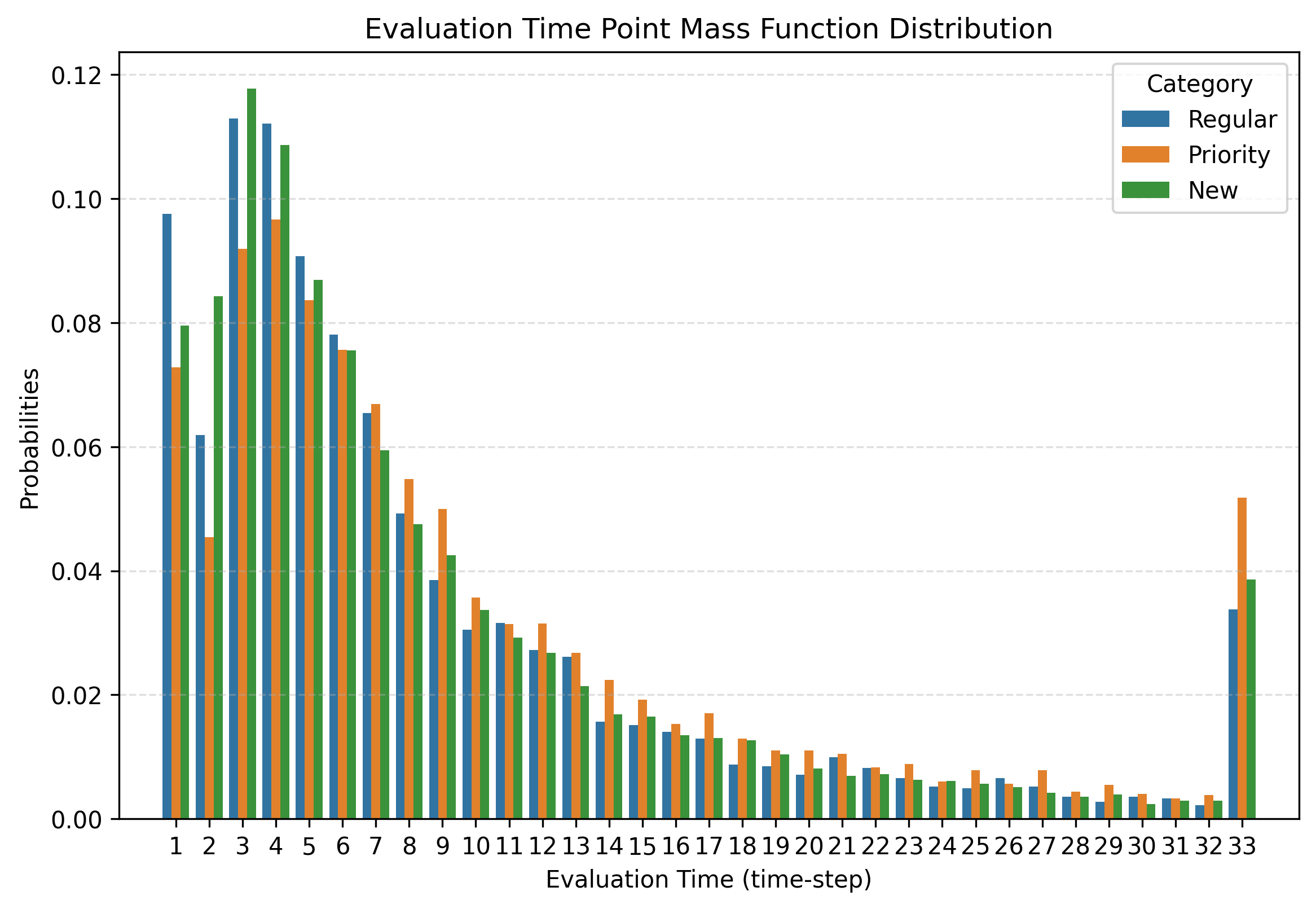}
  \caption{Evaluation time point mass function distribution for the three categories of customers.}
  \label{pic:service}
  \end{subfigure}
  \quad
  \begin{subfigure}[b]{0.45\textwidth}
  \centering
  \includegraphics[width=\textwidth]{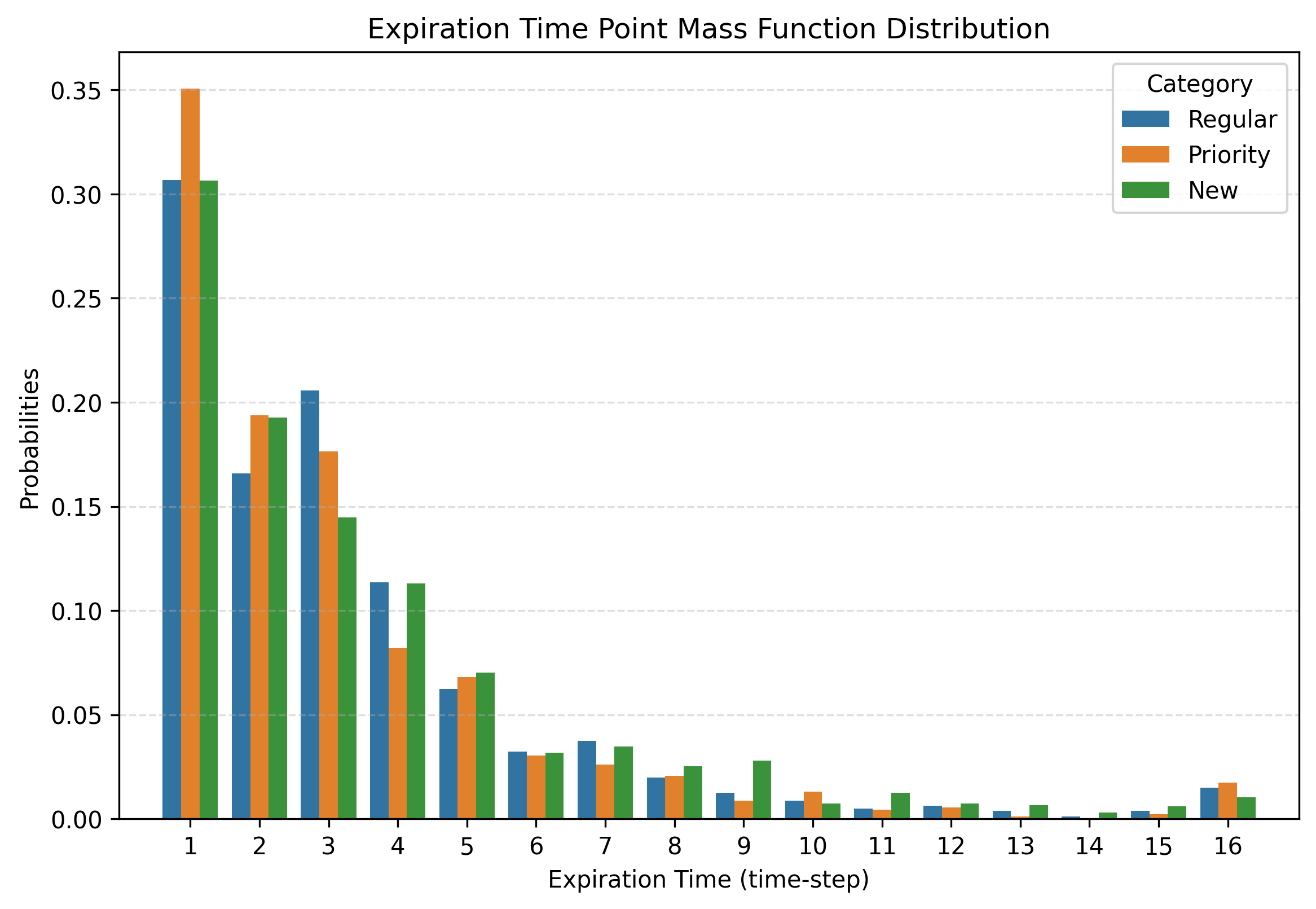}
  \caption{Expiration time point mass function distribution for the three categories of customers.}
  \label{pic:departure}
  \end{subfigure}
  \caption{}
  \label{}
\end{figure}

\ignore{
\begin{figure}[p]
  \centering
  \includegraphics[width=3.5in]{images/exp_distribution.png}
  \caption{Expiration time point mass function distribution for the three categories of customers.}
  \label{pic:departure}
\end{figure}
}

\ignore{
\begin{figure}[p]
  \centering
  \small
  \includegraphics[width=0.8\linewidth]{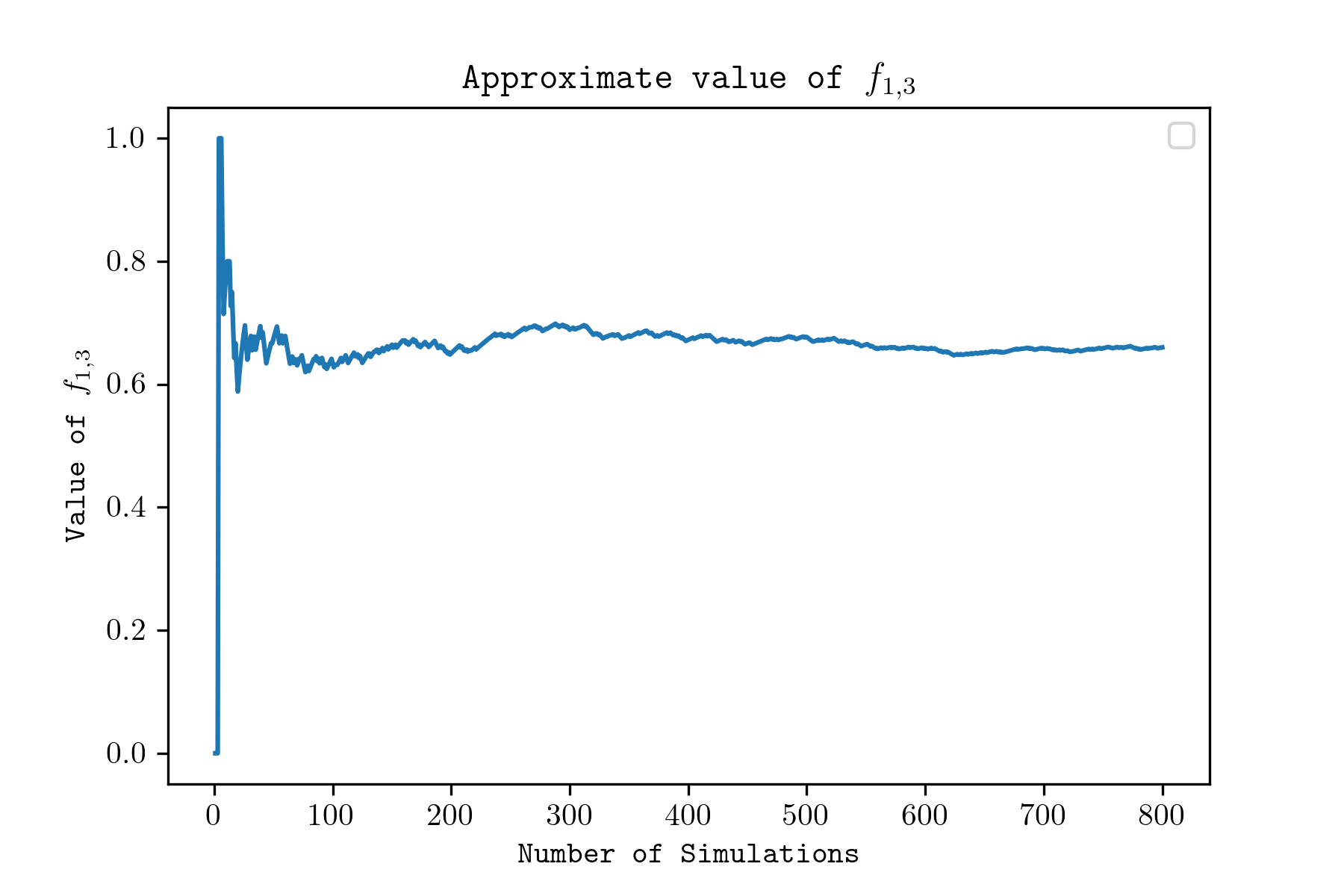}
  \caption{Sample Path of Approximating $f_{1,3}$}
  \label{fig:sample_path_approx}
\end{figure}
}

\end{document}